  \renewcommand{\theequation}{%
         \thesection.\arabic{equation}}
\newtheorem{thm}{Theorem}[section]
\newtheorem{lem}[thm]{Lemma}
\newtheorem{prop}[thm]{Proposition}
\newtheorem{cor}[thm]{Corollary}
\newtheorem{rem}[thm]{Remark}
\newtheorem{dfn}[thm]{Definition}
\newcommand{\bt}{\boldsymbol{\theta}}
\newcommand{\pd}{\partial}
\newcommand{\gm}{\gamma}
\newcommand{\Gm}{\Gamma}
\newcommand{\R}{\mathbb{R}}
\newcommand{\Z}{\mathbb{Z}}
\newcommand{\N}{\mathbb{N}}
\newcommand{\vk}{\kappa}
\newcommand{\vp}{\varphi}
\newcommand{\V}{\boldsymbol{V}}
\newcommand{\ve}{\varepsilon}
\begin{document}

\title{\bf A second order gradient flow  of  $p$-elastic planar networks}
\author{ 
{\sc Matteo Novaga}  
\thanks{Dipartimento di Matematica, Universit\`a di Pisa, Pisa, Italy, \url{matteo.novaga@unipi.it}} 
\and
{\sc Paola Pozzi}  
\thanks{Fakult\"at f\"ur Mathematik, Universit\"at Duisburg-Essen, Essen, Germany, \url{paola.pozzi@uni-due.de}}
}
\date{\today}

\maketitle

\begin{abstract}
\noindent We consider a second order gradient flow of the $p$-elastic energy for a planar theta-network of three curves 
with fixed lengths. 
We construct a weak solution of the flow by means of an implicit variational scheme.
We show long-time existence of the evolution and convergence to a critical point of the energy.
\end{abstract}

{\small\textbf{Keywords:} elastic flow of networks,  minimizing movements, long-time existence

\textbf{MSC(2010):} 
35K92,  	
53A04,  	
53C44.  	
}




\section{Introduction}
In this paper we consider a network composed of three inextensible planar curves.
Each  curve $\gm_i=\gm_i(s)$ of fixed length $L_i>0$, $i=1,2,3$, is parametrized by arc-length $s$ over the domain $\bar{I}_i=[0,L_i]$. Without loss of generality we may assume that
\begin{align}\label{lengths}
 0 < L_{3} \leq \min \{L_{2}, L_{1} \}.
\end{align}
Let $T^i=T^i(s)=\gamma_i'(s)$ denote the unit tangent of the curve $\gamma_i$. It is well known  that   a planar curve is uniquely determined by its tangent indicatrix $T^{i}$, up to rotation and translation.
Omitting for simplicity the indices of the curves, we recall the formulas $T'= \vec{\kappa}=\vk N$, $N'=-\vk T$, as well as $\theta'(s)= \vk(s)$, where  $T=(\cos \theta, \sin \theta)$. The map $\theta: I \to \R$ is called the indicatrix of the curve $\gamma$. 

We shall consider a {\it theta-network} $\Gamma =\{ \gm_{1}, \gm_{2}, \gm_{3} \}$, 
where the three curves satisfy
\begin{align*}
\gamma_{1}(0)&=\gamma_{2}(0)=\gamma_{3}(0),\\
\gamma_{1}(L_{1}) &= \gm_{2}(L_{2})= \gm_{3}(L_{3}).
\end{align*}
Without loss of generality we shall assume that the first triple point is  placed at the origin, that is, $\gamma_{i}(0)=0$, for $i=1,2,3$. From the concurrency conditions above it follows immediately that
\begin{align}\label{constraintN}
\int_{I_{1}} T^{1}(s) ds= \int_{I_{2}} T^{2}(s) ds=\int_{I_{3}} T^{3}(s) ds.
\end{align}

Letting $p\in (1,+\infty)$, the $p$-elastic energy of the network is defined as
\begin{align*}
E_{p}(\Gamma)= \sum_{i=1}^{3} E_{p}(\gamma_{i}),
\end{align*}
where
\begin{align*}
E_{p}(\gamma_{i}):=\frac{1}{p} \int_{I_{i}}|\vec{\vk}_{i}|^{p} ds = \frac{1}{p} \int_{I_{i}} |\partial_{s} T^{i}|^{p} ds=:F_{p}(T^{i}).
\end{align*}

Minimizers for the elastic energy (i.e. with $p=2$) plus an additional term penalizing the growth of the length of the curves  have been investigated in \cite{DNP19}, where
an angle condition at the triple junctions has been imposed in order to avoid the collapse of a minimizing sequence to a point. Here the situation is different, because the length of each curve is fixed. In particular it is not  necessary to impose the angle condition at the triple junctions.

Here we consider the evolution of  the network $\Gamma$ via a {\it second order} gradient flow first introduced by Y. Wen 
in \cite{Wen} (see also \cite{Lin15,OPW18,LinKai18}). More precisely we will consider the $L^2$-gradient flow of the energy
$$F_{p}(\Gamma):=\sum_{i=1}^{3}F_{p}(T^{i}),$$ when expressed in terms of the angles corresponding to the tangent vectors.
This gives rise to a second order parabolic system.

We shall express the energy $F_{p}(\Gamma)$ and the corresponding gradient flow by means of 
the three scalar maps $\theta^{i}: I_{i} \to \R$ such that $T^i=(\cos \theta^i, \sin \theta^i)$.
Let us now state our main existence results.
We let
\begin{align*}
H := \Big \{ \boldsymbol{\theta}=&(\theta^{1}, \theta^{2}, \theta^{3}) \in W^{1,p}(0,L_{1}) \times W^{1,p}(0,L_{2}) \times W^{1,p}(0,L_{3}) \, | \,\\
&\int_{I_{1}} (\cos \theta^{1}, \sin \theta^{1}) ds = \int_{I_{2}} (\cos \theta^{2}, \sin \theta^{2}) ds =\int_{I_{3}} (\cos \theta^{3}, \sin \theta^{3}) ds \Big\}.
\end{align*}

\begin{thm}\label{thm:existence}
Let $\bt_{0} \in H$ and let $T>0$.   
Assume that the lengths of the three curves are such that 
\begin{align}\label{cond1}
 L_{3} < \min \{ L_{1}, L_{2}\} .
 \end{align}
Then, there exist functions 
$\bt= (\theta^{1}, \theta^{2}, \theta^{3})$, with $\theta^{j} \in L^{\infty}(0,T; W^{1,p}(I_{j})) \cap H^{1}(0,T; L^{2}(I_{j}))$,
and Lagrange multipliers $\lambda^{1}, \lambda^{2}, \mu^{1}, \mu^{2} \in L^{2}(0,T)$ such that
the following properties hold:

(i) for any $\boldsymbol{\varphi}=(\vp^{1},\vp^{2}, \vp^{3})$ with $\varphi^{j} \in L^{\infty} (0,T; W^{1,p}(I_{j}))$, $j=1,2,3,$  there holds
\begin{align}\nonumber
0&=\sum_{j=1}^{3} \int_{0}^{T}\int_{I_{j}}\partial_{t} \theta^{j}\,  \varphi^{j} ds   dt
+ \sum_{j=1}^{3} \int_{0}^{T}\int_{I_{j}} | \theta^{j}_{s} |^{p-2}  \theta^{j}_{s}  \cdot  \vp_{s} \, ds dt \\\label{eqtest}
& \qquad-\int_{0}^{T}({\lambda}^{1} - \mu^{1})\int_{I_{1}} \sin (\theta^{1}) \, \varphi^{1} ds  dt
+ \int_{0}^{T}(\lambda^{2} -\mu^{2})\int_{I_{1}} \cos (\theta^{1} )\,  \varphi^{1} ds dt \\\nonumber
& \qquad +\int_{0}^{T}\lambda^{1} \int_{I_{2}} \sin (\theta^{2}) \, \vp^{2} ds  dt -\int_{0}^{T}\lambda^{2} \int_{I_{2}} \cos (\theta^{2}) \, \vp^{2} ds dt \\\nonumber
& \qquad -\int_{0}^{T}\mu^{1} \int_{I_{3}} \sin (\theta^{3}) \, \vp^{3} ds dt  +\int_{0}^{T}\mu^{2}\int_{I_{3}} \cos (\theta^{3}) \, \vp^{3} ds dt\, ;
\end{align}

(ii) the maps $ |\partial_{s} \theta^{j}|^{p-2}  \partial_{s} \theta^{j}$ belong to 
$L^{\infty}(0,T; L^{\frac{p}{p-1}}(I_{j}))\cap L^{2}(0,T; H^{1}(I_{j}))$, $j=1,2,3$,
and satisfy
\begin{align}\label{E1}
(|\partial_{s} \theta^{1}|^{p-2}\partial_{s} \theta^{1})_{s}&=\theta^1_{t} - (\lambda^{1}-\mu^{1}) \sin \theta^{1}  
+ (\lambda^{2}-\mu^{2}) \cos\theta^{1}, \\\label{E2}
 (|\partial_{s} \theta^{2}|^{p-2}\partial_{s} \theta^{2})_{s}&=\theta^2_{t} + \lambda^{1} \sin \theta^{2}  
 - \lambda^{2} \cos \theta^{2},\\\label{E3}
 (|\partial_{s} \theta^{3}|^{p-2}\partial_{s} \theta^{3})_{s}&=\theta^3_{t} -\mu^{1} \sin \theta^{3}  
 +\mu^{2} \cos \theta^{3}, \\\label{E4}
 \theta^{j}_{s} (0,t)&=\theta^{j}_{s} (L_{j},t) =0,\ \text{for $j=1,2,3$ and for a.e. $t \in (0,T)$;}
\end{align}

(iii) for all $t \in [0,T]$, there holds
\begin{align}\label{constraintlimit}
\int_{I_{1}} (\cos \theta^{1}, \sin \theta^{1}) ds = \int_{I_{2}} (\cos \theta^{2}, \sin \theta^{2}) ds=\int_{I_{3}} (\cos \theta^{3}, \sin \theta^{3}) ds.
\end{align}
\end{thm}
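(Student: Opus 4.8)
The plan is to construct the solution by a minimizing movements (implicit Euler / De Giorgi) scheme, discretizing in time and passing to the limit. Fix a time step $\tau = T/N$ and, starting from $\bt_0 \in H$, define iteratively $\bt_{k+1}$ as a minimizer over the admissible set $H$ of the functional
\begin{align*}
\bt \mapsto F_p(\Gamma(\bt)) + \frac{1}{2\tau} \sum_{j=1}^3 \int_{I_j} |\theta^j - \theta_k^j|^2 \, ds.
\end{align*}
Existence of each minimizer follows from the direct method: the energy $F_p$ is coercive and weakly lower semicontinuous on $W^{1,p}$, the quadratic penalization controls the $L^2$-distance, and the constraint set $H$ is weakly closed because the maps $\theta^j \mapsto \int_{I_j}(\cos\theta^j,\sin\theta^j)\,ds$ are weakly continuous on $W^{1,p} \hookrightarrow C^0$. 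This yields a well-defined discrete trajectory and, by comparing with the previous iterate, the standard energy estimate $F_p(\Gamma(\bt_{k+1})) + \frac{1}{2\tau}\sum_j \|\theta_{k+1}^j - \theta_k^j\|_{L^2}^2 \le F_p(\Gamma(\bt_k))$, which summed over $k$ gives a uniform bound on the discrete time-derivative in $L^2$ and a uniform $W^{1,p}$ bound on the interpolants.

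The second step is to derive the Euler--Lagrange equations for each discrete minimizer. Because $\bt_{k+1}$ minimizes subject to the two vector constraints appearing in $H$ (three equal integrals, i.e.\ four scalar conditions after removing one redundancy, which explains the four multipliers $\lambda^1,\lambda^2,\mu^1,\mu^2$), I would introduce Lagrange multipliers. The crucial nondegeneracy issue is to verify that the constraint is qualified so that multipliers exist and are bounded uniformly in $\tau$; this is precisely where hypothesis \eqref{cond1}, $L_3 < \min\{L_1,L_2\}$, enters, by guaranteeing that the linearized constraint map is surjective (the integrals of $\cos\theta^j,\sin\theta^j$ cannot all be simultaneously degenerate). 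The resulting discrete equation is the implicit-Euler version of \eqref{E1}--\eqref{E3} with the Neumann-type boundary conditions \eqref{E4} arising naturally from integration by parts against test functions not vanishing at the endpoints.

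The third step is to define piecewise-constant and piecewise-affine interpolants in time and pass to the limit $\tau \to 0$. From the uniform estimates I extract: weak-$*$ convergence of $\theta^j$ in $L^\infty(0,T;W^{1,p})$, weak convergence of $\partial_t\theta^j$ in $L^2$, and, via Aubin--Lions, strong convergence in $L^2(0,T;C^0(I_j))$, which upgrades the pointwise limits of $\cos\theta^j$ and $\sin\theta^j$ and lets me pass to the limit in the constraint to obtain \eqref{constraintlimit}. The weak formulation \eqref{eqtest} follows by testing the discrete equations and sending $\tau\to 0$, using the uniform $L^2$-bounds on the multipliers to extract weakly convergent subsequences $\lambda^i,\mu^i \in L^2(0,T)$.

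The main obstacle I anticipate is twofold and both difficulties are nonlinear. First, passing to the limit in the $p$-Laplacian term $|\theta^j_s|^{p-2}\theta^j_s$ requires more than weak $W^{1,p}$ convergence: I expect to need a Minty--Browder monotonicity argument, combined with the additional regularity in (ii) asserting $|\theta^j_s|^{p-2}\theta^j_s \in L^2(0,T;H^1)$, to identify the weak limit of the nonlinear flux with $|\theta^{j,\lim}_s|^{p-2}\theta^{j,\lim}_s$. Establishing this $H^1$-in-space regularity of the flux uniformly in $\tau$ (a difference-quotient or testing-with-the-equation estimate at the discrete level) is the technically heaviest part. Second, the uniform boundedness of the Lagrange multipliers is delicate precisely because the constraint qualification degenerates as the three curves' configurations approach alignment; controlling this via the strict inequality \eqref{cond1} and the coercivity furnished by the energy bound is the conceptual crux on which the whole limit passage rests.
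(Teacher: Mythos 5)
Your proposal follows essentially the same route as the paper's proof: an implicit time-discrete minimization over $H$, Lagrange multipliers obtained via constraint qualification and the implicit function theorem (with the condition $L_{3}<\min\{L_{1},L_{2}\}$ entering exactly where you say, through the invertibility of the matrices $A^{i}$), uniform energy/velocity/multiplier bounds, time interpolants, and a Minty-type convexity argument combined with discrete $H^{1}$-regularity of the flux $|\theta^{j}_{s}|^{p-2}\theta^{j}_{s}$ (obtained by testing the discrete Euler--Lagrange equation) to identify the nonlinear limit. The only cosmetic deviation is your use of Aubin--Lions, where the paper proves space-time H\"older equicontinuity and applies Arzel\`a--Ascoli, which yields uniform convergence and hence the constraint (iii) for \emph{all} $t\in[0,T]$ directly; with your compactness you would get it only for a.e.\ $t$, but this is repaired immediately since $\theta^{j}\in H^{1}(0,T;L^{2}(I_{j}))$ gives continuity of $t\mapsto\int_{I_{j}}(\cos\theta^{j},\sin\theta^{j})\,ds$.
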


Notice that the time $T>0$ can be chosen arbitrarily, so that the weak solutions 
$\bt$ and the Lagrange multipliers $\vec{\lambda}=(\lambda^{1}, \lambda^{2}),\,\vec{\mu}=(\mu^{1}, \mu^{2})$ can be defined globally on the whole of $(0,+\infty)$,
and Theorem \ref{thm:existence} provides long-time  existence 
of the evolution. 

Concerning the behavior of the solutions as $t\to+\infty$, we will show that they converge,
on a suitable sequence of times, to a critical point of the energy $F_p(\Gamma)$.

\begin{thm}\label{thm:longtime}
 Assume \eqref{cond1} and 
let $\bt_{0} \in H$. Let
  $\bt= (\theta^{1}, \theta^{2}, \theta^{3})$, with $\theta^{j} \in L^{\infty}_{loc}(0,\infty; W^{1,p}(I_{j})) \cap H^{1}_{\rm loc}(0,\infty; L^{2}(I_{j}))$, and $\lambda^{1}, \lambda^{2}, \mu^{1}, \mu^{2} \in L^{\infty}(0,\infty)$ be the solutions given 
by Theorem \ref{thm:existence}. Then there exist a sequence of times $t_n\to \infty$,  Lagrange multipliers
  $\lambda_\infty^{1}, \lambda_\infty^{2}, \mu_\infty^{1}, \mu_\infty^{2} \in\R$,
and limit functions
$\bt_{\infty}=(\theta_\infty^{1}, \theta_\infty^{2}, \theta_\infty^{3})$,
with $\theta_\infty^{j} \in W^{1,p}(I_{j})$,
such that the following system holds:
 \begin{align}\nonumber
 (|\partial_{s} \theta^{1}_{\infty}|^{p-2} \partial_{s} \theta^{1}_{\infty})_{s}&= - (\lambda^{1}-\mu^{1}) \sin \theta^{1}_{\infty}  + (\lambda^{2}-\mu^{2}) \cos\theta^{1}_{\infty}\qquad \, \text{ in } I_{1}, \\\label{sislim}
 (|\partial_{s} \theta^{2}_{\infty}|^{p-2} \partial_{s} \theta^{2}_{\infty})_{s}&= \lambda^{1} \sin \theta^{2}_{\infty}  - \lambda^{2} \cos \theta^{2}_{\infty}\qquad\qquad \quad\qquad\ \ \text{ in } I_{2},\\\nonumber (|\partial_{s} \theta^{3}_{\infty}|^{p-2} \partial_{s} \theta^{3}_{\infty})_{s}&= -\mu^{1} \sin \theta^{3}_{\infty}  +\mu^{2}\cos \theta^{3}_{\infty}\qquad \qquad\qquad \qquad \text{ in } I_{3},
\end{align}
together with the boundary conditions 
\begin{equation}\label{condlim}
\partial_{s}\theta^{j}_{\infty} (0)=\partial_{s}\theta^{j}_{\infty} (L_{j}) =0\qquad \text{for $j=1,2,3$.}
\end{equation}
\end{thm}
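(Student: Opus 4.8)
The strategy is the standard ``convergence along a subsequence of times'' argument for gradient flows; the crux is to upgrade weak to strong convergence so as to pass to the limit in the nonlinearity $|\partial_s\theta|^{p-2}\partial_s\theta$. First I would exploit the energy dissipation inequality produced by the implicit variational scheme of Theorem~\ref{thm:existence}, namely $F_p(\bt(t))+\int_0^t\sum_j\|\partial_t\theta^j\|_{L^2(I_j)}^2\,d\tau\le F_p(\bt_0)$ for a.e.\ $t$. Since $F_p\ge 0$, letting $t\to\infty$ gives $\int_0^\infty\sum_j\|\partial_t\theta^j\|_{L^2(I_j)}^2\,dt<\infty$, whence $\liminf_{t\to\infty}\sum_j\|\partial_t\theta^j(t)\|_{L^2(I_j)}^2=0$. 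I therefore fix a sequence $t_n\to\infty$ in the full-measure set of times at which \eqref{E1}--\eqref{E4} hold and the multipliers are defined, along which $\partial_t\theta^j(t_n)\to 0$ in $L^2(I_j)$. The same energy bound gives $\|\partial_s\theta^j(t_n)\|_{L^p(I_j)}\le C$ uniformly; after subtracting a suitable integer multiple of $2\pi$ (which alters neither the constraint, the energy, nor the equations, all invariant under $\theta^j\mapsto\theta^j+2\pi k$) I may assume $\theta^j(0,t_n)\in[0,2\pi)$, so that $\theta^j(t_n)$ is bounded in $W^{1,p}(I_j)$.

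The key object is $v^j:=|\partial_s\theta^j|^{p-2}\partial_s\theta^j$, which by part~(ii) of Theorem~\ref{thm:existence} lies in $L^2(0,T;H^1(I_j))$ and solves $\partial_s v^j=\partial_t\theta^j-(\text{multiplier terms})$. Evaluating this at $t=t_n$ and using that $\partial_t\theta^j(t_n)\to 0$ in $L^2$, that $|\sin\theta^j|,|\cos\theta^j|\le 1$, and that the multipliers are bounded (here the hypothesis $\lambda^j,\mu^j\in L^\infty(0,\infty)$ enters), I obtain $\|\partial_s v^j(t_n)\|_{L^2(I_j)}\le C$. Combined with $\|v^j(t_n)\|_{L^{p/(p-1)}(I_j)}^{p/(p-1)}=\int_{I_j}|\partial_s\theta^j(t_n)|^p\le C$, this yields a uniform $H^1(I_j)$ bound on $v^j(t_n)$. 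By the compact embedding $H^1(I_j)\hookrightarrow\hookrightarrow C^0(\bar I_j)$ I extract a subsequence with $v^j(t_n)\to v^j_\infty$ in $C^0(\bar I_j)$ and $v^j(t_n)\rightharpoonup v^j_\infty$ in $H^1(I_j)$. Since $\sigma(v):=|v|^{(2-p)/(p-1)}v$ is a continuous inverse of $\xi\mapsto|\xi|^{p-2}\xi$, we have $\partial_s\theta^j(t_n)=\sigma(v^j(t_n))\to\sigma(v^j_\infty)$ pointwise and, by dominated convergence, strongly in $L^p(I_j)$; together with the extraction of a limit of the bounded scalars $\theta^j(0,t_n)$ this gives $\theta^j(t_n)\to\theta^j_\infty$ strongly in $W^{1,p}(I_j)$ and uniformly, with $v^j_\infty=|\partial_s\theta^j_\infty|^{p-2}\partial_s\theta^j_\infty$.

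Because $\lambda^j,\mu^j\in L^\infty(0,\infty)$, the scalar sequences $\lambda^j(t_n),\mu^j(t_n)$ are bounded, so after a further subsequence they converge to real numbers $\lambda^j_\infty,\mu^j_\infty$; this is all that is needed, since only the existence of limiting multipliers is asserted. I then pass to the limit in \eqref{E1}--\eqref{E3} at $t=t_n$: the left-hand sides $\partial_s v^j(t_n)$ converge weakly in $L^2(I_j)$ to $\partial_s v^j_\infty$, while on the right $\partial_t\theta^j(t_n)\to 0$ in $L^2$ and, by the uniform convergence $\theta^j(t_n)\to\theta^j_\infty$ with $\lambda^j(t_n)\to\lambda^j_\infty$, $\mu^j(t_n)\to\mu^j_\infty$, the trigonometric terms converge strongly in $L^2(I_j)$; equating the limits gives exactly \eqref{sislim}. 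The boundary conditions \eqref{condlim} follow since $v^j(t_n)(0)=v^j(t_n)(L_j)=0$ by \eqref{E4} pass to the uniform limit, whence $v^j_\infty(0)=v^j_\infty(L_j)=0$ and thus $\partial_s\theta^j_\infty(0)=\partial_s\theta^j_\infty(L_j)=0$; the limiting constraint \eqref{constraintlimit} for $\bt_\infty$ is preserved because $\cos,\sin$ are continuous and $\theta^j(t_n)\to\theta^j_\infty$ uniformly.

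The main obstacle is precisely this nonlinear term: weak $W^{1,p}$ convergence of $\theta^j(t_n)$ cannot be passed through $|\partial_s\theta^j|^{p-2}\partial_s\theta^j$. The device that resolves it is to transfer regularity to $v^j$ rather than to $\theta^j$, using that at the good times $t_n$ the $H^1$-regularity of $v^j$ from part~(ii) is quantitatively available thanks to the smallness of $\partial_t\theta^j(t_n)$ and the $L^\infty$-bound on the multipliers; compactness in $C^0$ and inversion of the monotone map then return strong convergence of $\partial_s\theta^j(t_n)$. A secondary point needing care is the $2\pi$-normalization of the angles, required to produce a genuine $W^{1,p}$ limit $\theta^j_\infty$ without disturbing the shift-invariant equations and constraint.
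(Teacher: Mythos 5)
Your proposal is correct and follows essentially the same route as the paper's proof: pick times $t_n\to\infty$ with $\partial_t\theta^j(t_n)\to 0$ in $L^2$ and convergent (bounded) multipliers, normalize the angles modulo $2\pi$, use the equations \eqref{E1}--\eqref{E3} at those times together with the $L^\infty$ multiplier bound to get a uniform $H^1(I_j)$ bound on $v^j=|\partial_s\theta^j|^{p-2}\partial_s\theta^j$, and then pass to the limit in the equations, constraint, and boundary conditions. Your explicit identification of the limit through the continuous inverse map $\sigma$ (which even upgrades to strong $W^{1,p}$ convergence of $\theta^j(t_n)$) simply spells out a step the paper states tersely when it asserts the weak $H^1$ and uniform convergence of $v^j(t_n)$ to $|\partial_s\theta^j_\infty|^{p-2}\partial_s\theta^j_\infty$.
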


Observe that  Theorem~\ref{thm:longtime} together with Remark~\ref{rem2.1} below yields the existence of configurations 
of planar theta-networks that are critical with respect to the elastic energy  (that is, $p=2$) 
and are subject to natural boundary conditions.
This result is relevant for the investigations undertaken in \cite{DLP18,GMP19}.
In fact we notice that, by direct method of the calculus of variations (see Section~\ref{sec:3.1} below), the energy $F_p$ always admits a global minimizers among theta-networks with curves of fixed length, moreover such a minimizer is regular (in the sense of Theorem \ref{thm:longtime}), satisfies \eqref{sislim}
and the natural boundary conditions \eqref{condlim} 
at the triple junctions.



If we do not assume \eqref{cond1} we are not able to show long-time existence, due to a technical difficulty in estimating the Lagrange multipliers $\lambda^{1}, \lambda^{2}, \mu^{1}, \mu^{2}$. However, if we assume that at least two initial curves are not flat, the same method yields the following short-time existence result.

\begin{thm}\label{thm:existence2}
Let $\bt_{0}\in H$  be such that 
\begin{equation}\label{eq:osc}
\min\left( \text{osc}_{\bar{I}_{j_1}} \, \theta_0^{j_1}(t),
\text{osc}_{\bar{I}_{j_2}} \, \theta_0^{j_2}(t)\right) \ge c >0\qquad \text{for some $j_1,j_2\in \{1,2,3\}$,}
\end{equation}
where $\text{osc}_{\bar{I_j}} \, \theta^j_0$ denotes the oscillation of $\theta^j_0$
on the interval $\bar{I_j}$. Then there exist $T=T(\bt_{0})>0$
and functions
$\bt= (\theta^{1}, \theta^{2}, \theta^{3})$, with $\theta^{j} \in L^{\infty}(0,T; W^{1,p}(I_{j})) \cap H^{1}(0,T; L^{2}(I_{j}))$,    
$\lambda^{1}, \lambda^{2}, \mu^{1}, \mu^{2} \in L^{2}(0,T)$ such that properties 
(i), (ii), (iii) of Theorem \ref{thm:existence} hold.
Moreover, letting $T_{max}$ be the maximal existence time of the evolution,  if  $T_{max}<+\infty$ there holds
\begin{equation}\label{eq:limlam}
\liminf_{t\to T_{max}^-}\, \max\left( \text{osc}_{\bar{I}_{j_1}} \, \theta^{j_1}(t),
\text{osc}_{\bar{I}_{j_2}} \, \theta^{j_2}(t)\right) = 0
\qquad \text{for some $j_1,j_2\in \{1,2,3\}$.}
\end{equation}
\end{thm}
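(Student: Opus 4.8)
The plan is to run the same implicit variational (minimizing-movements) scheme used to prove Theorem~\ref{thm:existence}, since the oscillation assumption \eqref{eq:osc} enters only through the a priori control of the Lagrange multipliers — precisely the place where \eqref{cond1} was used. Thus I would first recall that, at each step of the scheme with time step $\tau$, the discrete minimizer satisfies the Euler--Lagrange system with four scalar multipliers $(\lambda^1,\lambda^2,\mu^1,\mu^2)$ attached to the four scalar constraints defining $H$, whose $L^2$-gradients are
\begin{align*}
\nabla G_1&=(-\sin\theta^1,\sin\theta^2,0), & \nabla G_2&=(\cos\theta^1,-\cos\theta^2,0),\\
\nabla G_3&=(-\sin\theta^1,0,\sin\theta^3), & \nabla G_4&=(\cos\theta^1,0,-\cos\theta^3).
\end{align*}
Testing the system against these four directions yields a linear system $M\,(\lambda^1,\lambda^2,\mu^1,\mu^2)^{\mathsf T}=r$, where $M$ is the Gram matrix $M_{kl}=\langle\nabla G_k,\nabla G_l\rangle_{L^2}$ and the residual $r$ collects the contributions of the discrete time derivative and of the $p$-elastic term. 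Using the energy dissipation inequality one bounds $|r|\le C\big(\|\bt_t\|_{L^2}+F_p(\bt_0)\big)$ uniformly in $\tau$, so everything reduces to inverting $M$.

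The key step is the quantitative invertibility of $M$. I would observe that $M$ is assembled, block by block, from the single-curve moment matrices $\int_{I_j}N^jN^{j\,\mathsf T}\,ds$ with $N^j=(-\sin\theta^j,\cos\theta^j)$, and that a null vector of $M$ forces on each curve $j$ a relation $\alpha_j\sin\theta^j=\beta_j\cos\theta^j$; if $\theta^j$ is non-constant this forces $\alpha_j=\beta_j=0$. A short case analysis then shows that as soon as \emph{two} of the three curves are non-flat the four gradients are linearly independent, so $M$ is positive definite. For the quantitative version I would bound the smallest eigenvalue of the moment matrix of a non-flat curve by its oscillation, via
\begin{equation*}
\min_{\phi}\int_{I_j}\sin^2(\phi-\theta^j)\,ds\ \ge\ \delta\big(\text{osc}_{\bar I_j}\theta^j,\ F_p(\bt_0),\ p\big)>0,
\end{equation*}
the point being that the $W^{1,p}\hookrightarrow C^0$ modulus of continuity (controlled by the energy) forces $\theta^j$ to stay at distance $\gtrsim c$ from any fixed value modulo $\pi$ on a set of measure bounded below. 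Combined with the linear-algebra step this gives $\|M^{-1}\|\le C/\delta$, hence $|\vec\lambda|,|\vec\mu|\le \frac{C}{\delta}\big(\|\bt_t\|_{L^2}+F_p(\bt_0)\big)$, which after integration in time yields the claimed $L^2(0,T)$ bound, \emph{provided} two curves stay non-flat.

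With this estimate in hand I would close the existence part by a continuity/bootstrap argument. Since $\theta^j\in L^\infty(0,T;W^{1,p})\cap H^1(0,T;L^2)$, interpolation makes $t\mapsto\theta^j(t)$ Hölder continuous with values in $C^0(\bar I_j)$, and since the oscillation is $2$-Lipschitz with respect to the sup-norm, $t\mapsto\text{osc}_{\bar I_j}\theta^j(t)$ is continuous with a modulus depending only on the uniform energy and multiplier bounds. Hence, starting from \eqref{eq:osc} at $t=0$, the two distinguished oscillations remain $\ge c/2$ on an interval $[0,T]$ with $T=T(\bt_0)>0$, on which all estimates are uniform and the scheme converges exactly as in Theorem~\ref{thm:existence}, producing a solution satisfying (i)--(iii).

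Finally, for the blow-up criterion I would argue by continuation. Let $T_{max}$ be the maximal existence time and suppose $T_{max}<\infty$. If \eqref{eq:limlam} failed, then for \emph{every} pair of curves $\liminf_{t\to T_{max}^-}\max(\text{osc},\text{osc})>0$; an elementary three-element argument then shows that the second largest of the three oscillations is bounded below on a left neighbourhood of $T_{max}$, so at least two curves stay non-flat up to $T_{max}$. Consequently all uniform bounds (energy, $\|\bt_t\|_{L^2}$, and the multipliers) persist up to $T_{max}$, one may take the $W^{1,p}$-limit of $\bt(t)$ as $t\to T_{max}^-$ as a new initial datum in $H$ and restart the scheme, extending the solution beyond $T_{max}$ — a contradiction. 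Hence \eqref{eq:limlam} holds for some pair $(j_1,j_2)$. I expect the main obstacle to be the quantitative eigenvalue bound on the moment matrix and its uniformity along the flow, together with making the continuation argument rigorous in the weak, time-discrete setting.
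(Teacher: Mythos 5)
Your proposal is correct and follows essentially the same route as the paper: the same minimizing-movements scheme, the same quantitative oscillation-to-moment-matrix lower bound controlling the Lagrange multipliers (the paper cites Lin's lemma, Lemma~\ref{lemmaLin}, and Corollary~\ref{corca}, where you re-derive the estimate; your symmetric Gram-matrix packaging is equivalent to the paper's block system), the same H\"older-in-time persistence of the oscillation lower bound on a short interval (Corollary~\ref{corpaola}), and the same continuation argument for \eqref{eq:limlam}. One caution: the H\"older modulus for $t\mapsto \text{osc}_{\bar{I}_j}\,\theta^{j}_n(t)$ must be applied to the discrete interpolants uniformly in $n$ and must be independent of the multipliers --- which it is, since it follows from the dissipation estimates of Theorem~\ref{thm:3.2} alone --- so your phrase ``modulus depending only on the uniform energy and multiplier bounds'' should read ``energy bounds'' only, as otherwise the bootstrap (multiplier bounds need oscillation bounds, which would need multiplier bounds) would be circular.
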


In order to show existence of weak solutions in 
Theorems \ref{thm:existence} and \ref{thm:existence2} we apply
an implicit variational scheme to the energy $F_p(\Gamma)$ expressed in terms of the 
functions $\theta^j$. Such time-discrete schemes have been used in the study of geometric evolutions 
starting from the pivotal works by Almgren, Taylor and Wang \cite{ATW} and by 
Luckhaus and Sturzenhecker \cite{LS} in the case of the mean curvature flow.
An extension of these techniques to multiple-phase systems can be found in \cite{BK}, while an adaptation 
to the $L^2$-gradient flow for the elastic energy ($p=2$) of an open curve, which gives rise to a fourth order flow,
has been recently proposed in \cite{Fusco,Ba19}.

Starting from the work by Polden \cite{Po}, the fourth order evolution of elastic curves
has been extensively studied in the literature,
under various constraints and boundary conditions, we refer  for instance to
\cite{Lin12,DP14,NO15,NO17,DCP17,DCP14,DPS,Po,Koiso,DKS,LangerSinger1,wen95,Okabe2007,Okabe2008,Oelz11,Oelz14,Wheeler} and references therein. On the other hand, not many works treat
the second order evolution that we consider here (see \cite{Wen,Lin15,OPW18}).

The geometric evolution of planar networks is more complicated, since a network is intrinsically singular,
due to the presence of the multiple junctions, and the evolution is typically described by a system 
instead of a single equation. However,
the evolution by curvature of a network has been studied in many papers, starting from the work 
\cite{BR} where the authors first establish the short-time existence of solutions. We refer to 
\cite{MNT,MMN,MNP,MNPS16} for a discussion of the long-time existence in some particular cases,
and the formation of singularities.
An important motivation to study this flow is the analysis of models of two-dimensional multiphase systems, where the problem of the dynamics of the interfaces between different phases arises naturally. As an example, the model where the energy is simply given by the total length of the interfaces has proven useful to describe the growth of grain boundaries in polycrystalline materials (see for instance \cite{Gu,Taylor,KL} and references therein).

Regarding the fourth order evolution of elastic networks we refer to \cite{GMP18} for the short-time existence of smooth solutions
and to \cite{DLP18,GMP19} for the long-time existence, under the assumption that the tangent vectors of the three concurring curves are not collinear at a triple junction. With our approach we don't need such a condition, even if our notion of solution is considerably weaker than the one considered in  \cite{DLP18,GMP19}.

We conclude by observing that the result in Theorem \ref{thm:existence2} can be extended without significant modifications
to the case of a network of three curves with a single triple junction and three fixed endpoints, which is the situation considered in \cite{DLP18,GMP18,GMP19}: this fact is briefly discussed in Remark~\ref{rem-triod}.

The article is organized as follows: in Section~\ref{sec:prelim} we derive and motivate the system \eqref{E1}--\eqref{E4} and discuss the well-posedness of the Lagrange multipliers. In Section~\ref{secex} we investigate the construction of a weak solution via minimizing movements and provide proofs of our main results. For the reader's convenience some proofs are collected in the Appendix.


\smallskip

\noindent \textbf{Acknowledgements:} MN has been supported by GNAMPA-INdAM and by the University of Pisa Project PRA 2017-18. PP has been supported by the DFG (German Research Foundation) Projektnummer: 404870139.

\section{First variation and preliminary results}\label{sec:prelim}


Let us compute the first variation of the energy $F_{p}(\Gamma)=\sum_{i=1}^{3}F_{p}(T^{i})$.
We consider variations $T^{i}_{\epsilon}= \frac{T^{i}+\epsilon \varphi^{i}}{|T^{i} + \epsilon \varphi^{i} |}$, for $\epsilon$ small enough and $\varphi^{i} \in C^{\infty}(\bar{I}_{i},\R^{2})$, $i=1,2,3$.
Then 
$$
\frac{d}{d \epsilon} \Big|_{\epsilon =0} T^{i}_{\epsilon}=\varphi^{i} - (\varphi^{i} \cdot T^{i} )T^{i}=:\varphi^{i\perp}.
$$
Since we want to include the constraint \eqref{constraintN} we compute
$$ \frac{d}{d \epsilon} \Big|_{\epsilon =0} \left [ \left(\sum_{i=1}^{3}  F_{p}(T^{i}_{\epsilon}) \right)+ \vec{\lambda} \cdot \left(\int_{I_{1}} T^{1}_{\epsilon} ds  - \int_{I_{2}} T^{2}_{\epsilon} ds \right)  + \vec{\mu} \cdot \left(  \int_{I_{3}} T^{3}_{\epsilon} ds  -\int_{I_{1}} T^{1}_{\epsilon} ds \right) \right] =0 , $$
where $\vec{\lambda}=(\lambda^{1},\lambda^{2})$, $ \vec{\mu}=(\mu^{1}, \mu^{2}) \in \R^{2}$ are Lagrange multipliers.
A direct computation  gives
\begin{align*}
0 &= \sum_{i=1}^{3}\int_{I_{i}} | \partial_{s} T^{i}|^{p-2}\partial_{s} T^{i} \cdot \partial_{s}( \varphi^{i\perp}) ds
\\
&\quad + \vec{\lambda} \cdot \left (\int_{I_{1}}\varphi^{1\perp} ds - \int_{I_{2}}\varphi^{2\perp} ds \right) + \vec{\mu} \cdot \left(  \int_{I_{3}} \varphi^{3\perp} ds  -\int_{I_{1}}\varphi^{1\perp}  ds \right) \\
&= \int_{I_{1}} [-  \partial_{s}( | \partial_{s} T^{1}|^{p-2}\partial_{s} T^{1})  + (\vec{\lambda}-\vec{\mu}) ] \cdot \varphi^{1\perp} ds  
+ \int_{I_{2}} [-  \partial_{s}(| \partial_{s} T^{2}|^{p-2}\partial_{s} T^{2})  - \vec{\lambda} ] \cdot \varphi^{2\perp} ds 
\\
& \quad +\int_{I_{3}} [-  \partial_{s}(| \partial_{s} T^{3}|^{p-2}\partial_{s} T^{3})  + \vec{\mu} ] \cdot \varphi^{3\perp} ds 
\\
& = \int_{I_{1}}[ - \nabla_{s} (  | \partial_{s} T^{1}|^{p-2}\partial_{s} T^{1}) + ((\vec{\lambda}-\vec{\mu}) \cdot N^{1}) N^{1}] \cdot \varphi^{1} \, ds
\\
&\quad - \int_{I_{2}}[  \nabla_{s} ( | \partial_{s} T^{2}|^{p-2}\partial_{s} T^{2}) + (\vec{\lambda} \cdot N^{2}) N^{2}] \cdot \varphi^{2} \, ds\\
& \quad 
+ \int_{I_{3}}[  -\nabla_{s} (  | \partial_{s} T^{3}|^{p-2}\partial_{s} T^{3}) + (\vec{\mu} \cdot N^{3}) N^{3}] \cdot \varphi^{3} \, ds,
\end{align*}
where $\nabla_{s} \varphi= \partial_{s} \varphi - (\partial_{s} \varphi \cdot T) T$ denotes the normal component of the derivative $\partial_{s} \varphi$ and where we have used the fact that $\partial_{s}T^{i}$ vanishes at the boundary. 

This motivates the study of the \emph{second-oder} problem
\begin{align} \label{sys1}
\partial_{t} T^{1} &= \nabla_{s} ( | \partial_{s} T^{1}|^{p-2}\partial_{s} T^{1}) - ((\vec{\lambda}-\vec{\mu}) \cdot N^{1}) N^{1} \qquad \text{ in } I_{1} \times (0,t_{*}) \\  \label{sys2}
\partial_{t} T^{2} &= \nabla_{s} ( | \partial_{s} T^{2}|^{p-2}\partial_{s} T^{2}) + (\vec{\lambda} \cdot N^{2}) N^{2} \qquad \text{ in } I_{2} \times 
(0,t_{*})\\
 \label{sys3} 
\partial_{t} T^{3} &= \nabla_{s} ( | \partial_{s} T^{3}|^{p-2}\partial_{s} T^{3}) - (\vec{\mu} \cdot N^{3}) N^{3} \qquad \text{ in } I_{3} \times (0,t_{*})\\
 \label{sys4}
\partial_{s} T^{i}&=0  \quad \text{ on } \quad \partial I_{i} \times (0,t_{*}), \quad i=1,2,3,\\ 
\label{sys5}
T^{i}(\cdot, 0) &=T^{i}_{0}, \quad i=1,2,3,
\end{align}
for some $t_{*}>0$, and for smooth initial data $T^{i}_{0}$ satisfying \eqref{constraintN} and 
\begin{align}\label{bdry}
\vec{\kappa}^{i}_{0}(s)=\partial_{s}T^{i}_0(s)=0 \quad \text{ for } s\in  \{ 0, L_{i} \}, \ i=1,2,3.
\end{align}
Here $\vec{\lambda}=  \vec{\lambda}(t)=(\lambda^{1}(t), \lambda^{2}(t))$, $\vec{\mu}= \vec{\mu}(t)=(\mu^{1}(t), \mu^{2}(t))$ are  such that
\begin{align}\label{L1}
 \int_{I_{1}} |\partial_{s}T^{1}|^{p} T^{1} ds - (\vec{\lambda}-\vec{\mu}) \cdot A^{1} &= \int_{I_{2}} |\partial_{s}T^{2}|^p T^{2} ds +\vec{\lambda}\cdot A^{2}\\\label{L2}
  \int_{I_{1}} |\partial_{s}T^{1}|^{p} T^{1} ds - (\vec{\lambda}-\vec{\mu}) \cdot A^{1}&=\int_{I_{3}} |\partial_{s}T^{3}|^p T^{3} ds  -\vec{\mu}\cdot A^{3}
\end{align}
hold, where
\begin{align}
A^{i}=A^{i}(t) = \int_{I_{i}} N^{i}\otimes  N^{i} ds, \qquad i=1,2,3,
\end{align}
are  $2\times 2$ time-dependent matrices. Note that if $\det A^{i}=0$ then the Lagrange multipliers might not be well defined. We will comment on the well-posedness of the Lagrange multipliers below.
 
Under the assumption that such Lagrange multipliers exist, we observe that
as long as the flow is well defined and smooth the constraint \eqref{constraintN} is satisfied. Indeed, we have
\begin{align*}
&\frac{d}{dt} \left(\int_{I_{1}} T^{1} ds  -   \int_{I_{2}} T^{2} ds\right)= \int_{I_{1}} T_{t}^{1} ds -\int_{I_{2}} T_{t}^{2} ds \\
&=  \int_{I_{1}} \nabla_{s} (  | \partial_{s} T^{1}|^{p-2}\partial_{s} T^{1}) - ((\vec{\lambda}-\vec{\mu}) \cdot N^{1}) N^{1}  ds
-\int_{I_{2}} \nabla_{s} (  | \partial_{s} T^{2}|^{p-2}\partial_{s} T^{}) - (\vec{\lambda} \cdot N^{2}) N^{2}  ds
 \\
&= \int _{I_{1}} \partial_{s} ( | \partial_{s} T^{1}|^{p-2}\partial_{s} T^{1}) ds - \int_{I_{1}} (\partial_{s}(  | \partial_{s} T^{1}|^{p-2}\partial_{s} T^{1}) \cdot T^{1}) T^{1} ds - (\vec{\lambda} -\vec\mu )\cdot A^{1} \\
& \quad - \left( \int _{I_{2}} \partial_{s} (  | \partial_{s} T^{2}|^{p-2}\partial_{s} T^{2}) ds - \int_{I_{2}} (\partial_{s}(  | \partial_{s} T^{2}|^{p-2}\partial_{s} T^{2}) \cdot T^{2}) T^{2} ds  +\vec{\lambda}  \cdot A^{2}\right) =0,
 \end{align*}
due to  the boundary conditions \eqref{sys4} (with the convention that $|0|^{p-2}0=0$),  
the fact that $\partial_{ss}T \cdot T=- |\partial_{s}T|^{2}$, $T \cdot \partial_{s}T =0$, and \eqref{L1}. Similarly, using now~\eqref{L2}, one verifies that
\begin{align*}
\frac{d}{dt} \left(\int_{I_{3}} T^{3} ds -   \int_{I_{1}} T^{1} ds\right)=0.
\end{align*}
In other words the constraint \eqref{constraintN} is satisfied along the flow.

Note also that the energy $F_p(\Gamma)$ decreases along the flow. Indeed, using the computation above,  the fact that $T_{t}^{i}$ are normal vector fields and $\partial_{t}(\int_{I_{1}} T^{1} ds -\int_{I_{2}} T^{2} ds) =0 =\partial_{t}(\int_{I_{3}} T^{3} ds -\int_{I_{1}} T^{1} ds)$, we find
\begin{align*}
\frac{d}{dt} F_{p}(\Gamma) &= \sum_{i=1}^{3}\int_{I_{i}} [ - \nabla_{s} (  | \partial_{s} T^{i}|^{p-2}\partial_{s} T^{i})] \cdot T^{i}_{t} ds \\
&= \sum_{i=1}^{3}\int_{I_{i}} [ - \nabla_{s} (  | \partial_{s} T^{i}|^{p-2}\partial_{s} T^{i})] \cdot T^{i}_{t} ds + \vec{\lambda}  \cdot \int_{I_{1}} T^{1}_{t} ds
- \vec{\lambda}  \cdot \int_{I_{2}} T^{2}_{t} ds \\
& \quad +  \vec{\mu}  \cdot \int_{I_{3}} T^{3}_{t} ds
- \vec{\mu}  \cdot \int_{I_{1}} T^{1}_{t} ds
 \\
& = \int_{I_{1}} [ - \nabla_{s} (  | \partial_{s} T^{1}|^{p-2}\partial_{s} T^{1})] \cdot T^{1}_{t}  + ((\vec{\lambda} -\vec{\mu}) \cdot N^{1}) N^{1} \cdot T^{1}_{t} ds\\
& \quad +\int_{I_{2}} [ - \nabla_{s} (  | \partial_{s} T^{2}|^{p-2}\partial_{s} T^{2})] \cdot T^{2}_{t}  - (\vec{\lambda}  \cdot N^{2}) N^{2} \cdot T^{2}_{t} ds
\\ 
& \quad +\int_{I_{3}} [ - \nabla_{s} (  | \partial_{s} T^{3}|^{p-2}\partial_{s} T^{3})] \cdot T^{3}_{t}  + (\vec{\mu} \cdot N^{3}) N^{3} \cdot T^{3}_{t} ds\\
&= - \sum_{i=1}^{3}\int_{I_{i}} |\partial_{t} T^{i}|^{2 } ds \leq 0.
\end{align*}

The system \eqref{sys1}-- \eqref{sys5} can be converted into  a system for the scalar maps 
$\theta^{i}: I_{i} \times (0, t_{*})\to \R$ satisfying 
\begin{align}\label{defiT}
T^{i}(s,t)= (\cos \theta^{i}(s,t), \sin \theta^{i}(s,t)), \qquad  i=1,2,3.
\end{align}
Indeed, since we have
\begin{align*} 
 \nabla_{s} ( |\partial_{s} T^i|^{p-2} \partial_{s}T^i)&=  \partial_{s} ( |\partial_{s} T^i|^{p-2}) \partial_{s}T^i  +
 |\partial_{s} T^i|^{p-2} ( \partial_{s}^{2} T^i - (\partial_{s}^{2} T^i \cdot T^i) T^i)\\
 &=(|\theta^i_{s}|^{p-2})_{s} \theta^i_{s}N^{i} + |\theta^i_{s}|^{p-2} \theta^i_{ss} N^{i},
\end{align*}
%
we obtain the system 
\begin{align}\label{systheta1}
\theta^{1}_{t}&= (|\theta_{s}^{1}|^{p-2} \theta_{s}^{1})_{s} + (\lambda^{1}(t)-\mu^{1}(t)) \sin \theta^{1} - (\lambda^{2}(t)-\mu^{2}(t)) \cos \theta^{1}  \qquad \text{ in } I_{1} \times (0, t_{*})\\
\label{systheta2gen}
\theta^{2}_{t}&=  (|\theta_{s}^{2}|^{p-2} \theta_{s}^{2})_{s}  -\lambda^{1}(t) \sin \theta^{2} + \lambda^{2}(t) \cos \theta^{2}  \qquad \text{ in } I_{2} \times (0, t_{*})\\
\label{systheta3gen}
\theta^{3}_{t}&=  (|\theta_{s}^{3}|^{p-2} \theta_{s}^{3})_{s} + \mu^{1}(t) \sin \theta^{3} - \mu^{2}(t) \cos \theta^{3}  \qquad \text{ in } I_{3} \times (0, t_{*})\\ \label{systheta4gen}
 \theta_{s}^{i} (0, t) & =  \theta_{s}^{i} (L_{i}, t)=0 \qquad t\in  (0, t_{*}),\ i=1,2,3,\\
\theta^{i}( \cdot, 0) &= \theta_{0}^{i}(\cdot) \qquad i=1,2,3.
\end{align}

Regarding the Lagrange multipliers, recall that   the matrix $A^{i}$  is given by
\begin{align}\label{matA}
A^{i}=A^{i}(t)= \left(\begin{array}{cc}
\int_{I_{i}} \sin^{2} \theta^{i}  ds& -\int_{I_{i}} \sin \theta^{i}  \cos \theta^{i} ds\\
-\int_{I_{i}} \sin \theta^{i}  \cos \theta^{i} ds& \int_{I_{i}} \cos^{2} \theta^{i}  ds
\end{array} \right) =: A^{i}(\theta^{i}),
\end{align}
so that by \eqref{L1}, \eqref{L2}, we can write
\begin{align}\label{L1t}
 \int_{I_{1}} |\partial_{s}\theta^{1}|^{p} (\cos \theta^{1}, \sin \theta^{1}) ds - (\vec{\lambda}-\vec{\mu}) \cdot A^{1}(\theta^{1}) &= \int_{I_{2}} |\partial_{s}\theta^{2}|^p (\cos \theta^{2}, \sin \theta^{2})  ds +\vec{\lambda}\cdot A^{2}(\theta^{2})\\\label{L2t}
  \int_{I_{1}} |\partial_{s}\theta^{1}|^{p} (\cos \theta^{1}, \sin \theta^{1})  ds - (\vec{\lambda}-\vec{\mu}) \cdot A^{1} (\theta^{1}) &=\int_{I_{3}} |\partial_{s}\theta^{3}|^p (\cos \theta^{3}, \sin \theta^{3})  ds  -\vec{\mu}\cdot A^{3}(\theta^{3}).
\end{align}
Letting, for $i=1,2,3$,
\begin{align}\label{defG}
 G^{i}=G^{i}(\theta^{i}):=\int_{I_{i}} |\partial_{s}\theta^{i}|^{p} (\cos \theta^{i}, \sin \theta^{i}) ds,
\end{align} 
the above system reads as
\begin{align}\label{e1}
G^{1} - (\vec{\lambda}-\vec{\mu}) \cdot A^{1}&= G^{2} + \vec{\lambda}\cdot A^{2}\\ \label{e2}
G^{1} - (\vec{\lambda}-\vec{\mu}) \cdot A^{1}&= G^{3} - \vec{\mu}\cdot A^{3},
\end{align}
that is, recalling that $G^{2} + \vec{\lambda}\cdot A^{2}= G^{3} - \vec{\mu}\cdot A^{3}$,
\begin{align}\label{e1bis}
 \vec{\lambda}\cdot A^{2} +\vec{\mu}\cdot A^{3}&= G^{3} - G^{2} \\ \label{e2bis}
 -\vec{\lambda} \cdot (A^{2}+A^{1}) +\vec{\mu}\cdot A^{1}&= G^{2} - G^{1}.
\end{align}
Assuming that $A^1$, $A^2$ are invertible, we then get
\begin{align*}
\vec{\lambda}&= (G^{3}-G^{2}- \vec{\mu} \cdot A^{3}) \cdot (A^{2})^{-1}\\
\vec{\mu}&= (G^{2}-G^{1 }  +\vec{\lambda} (A^{2}+A^{1})  ) \cdot (A^{1})^{-1},
\end{align*}
which yields
\begin{align*}
\vec{\mu}\,(I + A^{3}((A^{2})^{-1}+(A^{1})^{-1}) )=(G^{2}-G^{1}) (A^{1})^{-1} +(G^{3}-G^{2}) ((A^{1})^{-1} +(A^{2})^{-1}).
\end{align*}

Observe that if  $\det (A^{i}) >0$ for $i=1,2,3$, then we can solve for $\vec{\mu}$ and $\vec{\lambda}$ and the Lagrange multipliers are well defined (simply write $(I + A^{3}((A^{2})^{-1}+(A^{1})^{-1}) )= A^{3} (\sum_{i=1}^{3} (A^{i})^{-1}) $ and use that $A^{i}$, $i=1,2,3$ are symmetric real (hence diagonalisable) and positive definite matrices (by Sylvester criterion), and that  the sum of positive definite matrices is again positive definite and hence invertible).
Note also that, by Cauchy-Schwarz inequality we have $\det A^{i} \geq 0$ $i=1,2,3$. 
A strict bound from below on the determinant is shown in \cite[Lemma~1]{Lin15} (see Lemma~\ref{lemmaLin} below), provided the considered curve is not a straight line (i.e. we need some oscillation of $\theta$).
In other words provided none of the curves is a straight line, then the system is well-posed.

The system  for the Lagrange multipliers can be solved in a slightly more general situation.
Indeed, if the matrices $A^{i}$ are such that
$\det(A^{i}) >0$ for $i=1,2$,  while $\det (A^{3}) = 0$, that is,
$\theta^{3}\equiv \theta^{*}$ for some constant $\theta^{*}$, 
we deduce that: 
\begin{itemize}
\item[(i)] $(A^{i})^{-1}$, $i=1, 2$
exist, are positive definite and symmetric; 
\item[(ii)] the matrix $M=(m_{ij})_{i,j=1,2}:= (A^{2})^{-1}+(A^{1})^{-1}$ is symmetric and positive definite, and by writing it down explicitly we infer that $m_{11}$ and $m_{22}$ are nonnegative. Moreover, since $\det M>0$, we have that
$$\sqrt{m_{11}}\sqrt{m_{22}} > |m_{21}|=|m_{12}|; $$
\item[(iii)] the symmetric matrix $A^{3}=(a_{ij})_{i,j=1,2}$ is given by $A^{3}= L_{3}\left(\begin{array}{cc}
 \sin^{2} \theta^{*}  & - \sin \theta^{*}  \cos \theta^{*} \\
- \sin \theta^{*}  \cos \theta^{*} &  \cos^{2} \theta^{*}  
\end{array} \right) 
  = L_{3}  P \left(\begin{array}{cc}
 0  &  0 \\
0 &  1  
\end{array} \right)   P^{-1}$ where $P=\left(\begin{array}{cc}
 \cos \theta^{*}  &  -\sin \theta^{*}   \\
\sin \theta^{*}   &  \cos\theta^{*}  
\end{array} \right) $ and $ P^{-1}=P^{t}$. 
In particular, note that  $a_{11}$ and $a_{22}$ are nonnegative and $\sqrt{a_{11}}\sqrt{a_{22}} \geq |a_{21}|=|a_{12}| $ holds;
\item[(iv)] $\det(A^{3}M) =\det A^{3} \det M =0$. Moreover   $(ii)$ and $(iii)$ yield
\begin{align*}
tr(A^{3}M) &=a_{11}m_{11} + a_{12} m_{21}+ a_{21}m_{12} + a_{22}m_{22} \\
& \geq a_{11}m_{11}+ a_{22}m_{22} -2 |a_{12}| |m_{12}| \\
&= (\sqrt{a_{11}}\sqrt{m_{11}} - \sqrt{a_{22} }\sqrt{m_{22}})^{2} + 2 \sqrt{a_{11}}\sqrt{m_{11}}\sqrt{a_{22} }\sqrt{m_{22}}-2 |a_{12}| |m_{12}|
\geq0. \end{align*}
This implies that the matrix $A^{3}M$ has eigenvalues $\omega_{1}=0$ and $\omega_{2}=tr(A^{3}M) \geq 0 $, and can be diagonalized.
Hence there exists an invertible  matrix $Q$ such that $$Q (A^{3}M) Q^{-1}=\left(\begin{array}{cc}
 0  & 0   \\
0  &  \omega_{2}  
\end{array} \right),  \quad \text{ where } 0 \leq \omega_{2} \leq C\left(L_{3}, L_{2},L_{1},\frac{1}{\det(A^{1})}, \frac{1}{\det(A^{2})}\right).  
$$ 
\end{itemize}
By writing 
\begin{align*}
I +A^{3}((A^{2})^{-1}+(A^{1})^{-1}) = I + A^{3}M = Q^{-1} ( I+ Q (A^{3}M) Q^{-1})Q=  Q^{-1} \left(\begin{array}{cc}
 1  & 0   \\
0  &  1+\omega_{2}  
\end{array} \right) Q 
\end{align*}
we infer that such matrix is invertible and we can solve the system for $\vec{\mu}$ and $\vec{\lambda}$. 
Moreover,  the above analysis yields that
\begin{align}\label{boundLLzero}
|\lambda| + |\mu| \leq C \left(\sum_{i=1}^{3} \int_{I_{i}} |\partial_{s} \theta^{i}|^{p} ds \right) \quad \text{ with } C=C\left(L_{1},L_{2},L_{3}, \frac{1}{\det A^{1}}, \frac{1}{\det A^{2}}\right).
\end{align}
This is a bound that is important for the analysis that follows.

Summing up  and recalling \eqref{lengths}, we have that  the Lagrange multipliers are well defined in the case
$L_{3} < \min \{ L_{1}, L_{2}\}$,
which corresponds to a situation where the curve $\gamma_{3}$ might be a straight line, whereas $\gamma_{1}$ and $\gamma_{2}$ necessarily have regions with non vanishing curvature. 
In fact, for the previous estimates on the Lagrange multipliers to hold, 
it is enough to assume that at most one of the three curves is flat.
This is the case we will mostly concentrate on, the remaining cases are briefly discussed in the following remark.

\begin{rem}\label{caso2}\rm
Let us first consider the case where $L_{3}=L_{2} <L_{1}$. As shown above the Lagrange multipliers are well defined as long as none of the curve is a straight line. If the curve $\gamma_{3}$ becomes straight, the same must happen for $\gamma_{2}$. More precisely, due to the theta-network configuration, we have that  $\gamma_{2}=\gamma_{3}$,  $A^{3}=A^{2}$  with $A^{3}$ as in case $(iii)$ discussed above, and $G^{2}=G^{3}=0$.
Summing up equations \eqref{e1} and \eqref{e2} yields
\begin{align*}
(\vec{\lambda} - \vec{\mu}) \cdot (2A^{1}+ A^{3}) &=2 G^{1}\\
(\vec{\lambda} + \vec{\mu}) \cdot A^{3} =0
\end{align*}
Since $(2A^{1}+ A^{3}) $ is positive definite and invertible, we get
\begin{align*}
(\vec{\lambda} - \vec{\mu})&= 2 G^{1} \cdot (2A^{1}+ A^{3})^{-1}\\
0&=(\vec{\lambda} + \vec{\mu}) \cdot P \left(\begin{array}{cc}
 0  & 0   \\
0  &  1  
\end{array} \right)=(\vec{\lambda} + \vec{\mu}) \cdot  \left(\begin{array}{cc}
 0  & -\sin \theta^{*}   \\
0  &  \cos \theta^{*}  
\end{array} \right)\\
& = (0,- (\lambda^{1}+\mu^{1})\sin \theta^{*}  + (\lambda^{2}+\mu^{2})\cos \theta^{*}).
\end{align*}
In this case, a solution of the system  \eqref{e1}, \eqref{e2} is given by 
\[
\vec{\lambda}=-\vec{\mu}=G^{1} \cdot (2A^{1}+ A^{3})^{-1}.
\]
However, the solution is not unique. Moreover, even if we can pick up a solution for which \eqref{boundLLzero} holds, we have no means to control the constant $C$ in \eqref{boundLLzero}  when two curves  simultaneously become flat.

\smallskip

In the case $L_{1}=L_{2}=L_{3}$, the three curves can become straight necessarily at the same time. When this happens, the energy is minimal and equal to zero, and the trivial solution of three coinciding segments is attained. In this case $G^{1}=G^{2}=G^{3}=0$, $A^{1}=A^{2}=A^{3}$ and $\vec{\lambda}=\vec{\mu}=0$ is a solution of the system \eqref{e1}, \eqref{e2}.
\end{rem}

\begin{rem}[Relation between classical formulation and $\theta$-formulation]\label{rem2.1}	\rm
For simplicity  we first consider a smooth evolution of a single curve  satisfying
$$\theta_{t}=  (|\theta_s|^{p-2}\theta_s)_{s} + \mu^{1}(t) \sin \theta - \mu^{2}(t) \cos \theta.$$
For a stationary point this implies
 \begin{align}\label{eqstat}
  0=  (|\theta_s|^{p-2}\theta_s)_{s} + \mu^{1} \sin \theta - \mu^{2}\cos \theta.\end{align}
After mutiplying by $\theta_{s}$ we obtain
 \begin{align*}
 0 =  \frac{d}{ds} \left (  \frac{p-1}{p} |\theta_{s}|^{p} -\mu^{1} \cos \theta - \mu^{2} \sin \theta\right)
 \end{align*}
 which gives
 $$ 
 \frac{p-1}{p} |\theta_{s}|^{p} - (\mu^{1} \cos \theta + \mu^{2} \sin \theta)  =: \tilde{\mu} \in \R.$$
 On the other hand, by differentiating \eqref{eqstat} we get
 \begin{align*}
 0 &= (|\theta_s|^{p-2}\theta_s)_{ss} + \theta_{s}(\mu^{1} \cos \theta + \mu^{2} \sin \theta) \\
  &=  (|\theta_s|^{p-2}\theta_s)_{ss} + \theta_{s} \left( \frac{p-1}{p} |\theta_{s}|^{p} -\tilde{\mu}\right) 
   = (|\theta_s|^{p-2}\theta_s)_{ss} +\frac{p-1}{p} |\theta_{s}|^{p}\theta_s -\tilde{\mu} \theta_{s} \\
   &=  (|\vk|^{p-2}\vk)_{ss} +\frac{p-1}{p}|\vk|^{p}\vk -\tilde{\mu} \vk.
 \end{align*}
 
Now let us consider a network satisfying the system \eqref{systheta1}, \eqref{systheta2gen},\eqref{systheta3gen}, \eqref{systheta4gen}. Reasoning as in the case of a single curve, we get that a stationary network  solves
 \begin{align*}
 (|\vk^1|^{p-2}\vk^1)_{ss} +\frac{p-1}{p}|\vk^{1}|^{p}\vk^1 -\tilde{\xi} \vk^{1}&=0, \quad \text{ in } I_{1}\\
 (|\vk^2|^{p-2}\vk^2)_{ss} +\frac{p-1}{p}|\vk^2|^{p}\vk^2 -\tilde{\lambda} \vk^{2}&=0, \quad \text{ in } I_{2}\\
 (|\vk^3|^{p-2}\vk^3)_{ss} +\frac{p-1}{p}|\vk^3|^{p}\vk^3 -\tilde{\mu} \vk^{3}&=0, \qquad \text{ in } I_{3},\\
 \vk^{i}&=0 \quad \text{ on } \partial I_{i}, \quad i=1,2,3,
 \end{align*}
 where
 \begin{align*}
 \tilde{\mu}&=\frac{p-1}{p}|\theta^{3}_{s}|^{p} - (\mu^{1} \cos \theta^{3} + \mu^{2} \sin \theta^{3}),\\
 \tilde{\lambda} &= \frac{p-1}{p}|\theta^{2}_{s}|^{p}+ (\lambda^{1} \cos \theta^{2} + \lambda^{2} \sin \theta^{2}),\\
 \tilde{\xi} &=\frac{p-1}{p}|\theta^{1}_{s}|^{p} - (( \lambda^{1}-\mu^{1}) \cos \theta^{1} + (\lambda^{2}-\mu^{2}) \sin \theta^{1}).
 \end{align*}
 Using the expressions above,  the fact that at a triple junction $\vk^{i}=\theta^{i}_{s}=0$, and the equations \eqref{systheta1}, \eqref{systheta2gen}, \eqref{systheta3gen} evaluated at a junction when the velocities $\theta^{i}_{t}=0$, one verifies that at a triple junction there holds
 \begin{align}\label{bla}
 \sum_{i=1}^{3} (|\theta^i_s|^{p-2}\theta^i_s)_{s} N^{i} = \tilde{\xi} T^{1}+ \tilde{\lambda}T^{2}+ \tilde{\mu}T^{3}.
 \end{align}
 If $p=2$, noting that $\theta^i_{ss} N^{i}= (\partial_{s} \vk^{i}) N^i = \nabla_{s} \vec{\vk}^{i}$ we see that a stationary network satisfies the natural boundary conditions at the triple junctions derived for the $L^{2}$-gradient flow of elastic networks in \cite{DLP18,GMP18,GMP19}.
\end{rem}

We now collect some important estimates which will be useful in the sequel.

\begin{lem}[\cite{Lin15}, Lemma 1]\label{lemmaLin}
Let $I=(0,L)$ and $\varphi: \bar{I} \to \R$ be a continuous function with positive oscillation $d_{0}$, i.e.
$$ \text{osc}_{\bar{I}} \, \varphi \geq d_{0}>0.$$
Suppose $\omega:[0,+\infty] \to [0,+\infty]$ is a continuous monotonic function which is a modulus of continuity of $\varphi$, i.e.,
$\omega(0)=0$ and 
$$|\varphi(s) -\varphi(\sigma)| \leq \omega(|s-\sigma|) \qquad \forall s, \sigma \in \bar{I}.
$$
Then we have the following estimates:

(i) There exists a positive constant
$$ C= \sin^{2}(\delta_{0}/4) \cdot \min \{ \omega^{-1}(\delta_{0}/4), L/2\},$$
where $\delta_{0}=\min \{ d_{0}, \pi\}$, such that
\begin{align*}
C \leq \int_{I} \sin^{2} (\varphi(s)+ \varphi_{*}) ds, \qquad C \leq \int_{I} \cos^{2} (\varphi(s)+ \varphi_{*}) ds ,
\end{align*}
for any arbitrary constant $\varphi_{*}$.

(ii) There holds
\begin{align*}
\det
 \left( \begin{array}{cc}
\int_{I} \sin^{2} \theta  ds & -\int_{I} \sin \theta  \cos \theta ds\\
-\int_{I} \sin \theta  \cos \theta ds & \int_{I} \cos^{2} \theta  ds
\end{array} \right) \geq  \frac{L}{2} C.
\end{align*}
\end{lem}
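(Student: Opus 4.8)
The plan is to derive part (ii) as a short consequence of part (i) and elementary linear algebra, so that the only real work lies in the lower bound of part (i).

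For part (i), I would set $\psi := \varphi + \varphi_*$ and record that $\text{osc}_{\bar I}\,\psi = \text{osc}_{\bar I}\,\varphi \geq d_0 \geq \delta_0$. It is convenient to work with the $1$-Lipschitz function $d(x):=\text{dist}(x,\pi\Z)$, so that $\sin^2 x = \sin^2(d(x))$ with $d(x)\in[0,\pi/2]$ and $\sin^2$ increasing on $[0,\pi/2]$. The first step is the elementary geometric claim that every closed interval $J\subseteq\R$ of length $\geq\delta_0$ (recall $\delta_0\leq\pi$) contains a point $v_0$ with $d(v_0)\geq\delta_0/2$. I would prove this by a short case distinction: if $|J|\geq\pi$ then $J$ meets the set $\tfrac{\pi}{2}+\pi\Z$, where $d=\pi/2$; otherwise $J$ lies between two consecutive maxima of $d$, on which $d(x)=|x-k\pi|$ is piecewise monotone, and a two-line computation gives $\max_J d\geq|J|/2\geq\delta_0/2$. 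Since $\psi$ is continuous on the connected compact $\bar I$, the set $\psi(\bar I)$ is exactly the interval $[\min\psi,\max\psi]$, whose length equals $\text{osc}\,\psi\geq\delta_0$; the intermediate value theorem then produces $\sigma_0\in\bar I$ with $d(\psi(\sigma_0))\geq\delta_0/2$.

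Next I would propagate this pointwise bound to a whole subinterval using the modulus of continuity. By monotonicity and continuity of $\omega$, for $|s-\sigma_0|\leq r:=\omega^{-1}(\delta_0/4)$ one has $|\psi(s)-\psi(\sigma_0)|=|\varphi(s)-\varphi(\sigma_0)|\leq\omega(|s-\sigma_0|)\leq\delta_0/4$, so the $1$-Lipschitz property of $d$ gives $d(\psi(s))\geq\delta_0/2-\delta_0/4=\delta_0/4$ and hence $\sin^2(\psi(s))\geq\sin^2(\delta_0/4)$. An elementary check shows that $I\cap[\sigma_0-r,\sigma_0+r]$ has measure at least $\min\{r,L/2\}$ for every position of $\sigma_0\in[0,L]$ (the endpoint positions being the worst case), and therefore $\int_I\sin^2\psi\,ds\geq\sin^2(\delta_0/4)\,\min\{\omega^{-1}(\delta_0/4),L/2\}=C$. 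The cosine bound follows immediately by replacing $\varphi_*$ with $\varphi_*+\pi/2$, since $\cos^2=\sin^2(\cdot+\pi/2)$ and the oscillation is unchanged.

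For part (ii) I would argue via the Rayleigh quotient of the symmetric positive semidefinite matrix $A$. For a unit vector $v=(\cos\alpha,\sin\alpha)$, using $N=(-\sin\theta,\cos\theta)$ one computes $v^\top A v=\int_I (N\cdot v)^2\,ds=\int_I\sin^2(\theta-\alpha)\,ds$, and part (i) applied with $\varphi=\theta$, $\varphi_*=-\alpha$ gives $v^\top A v\geq C$ for every unit $v$; hence the smallest eigenvalue satisfies $\lambda_{\min}\geq C$. Since $\text{tr}\,A=\int_I(\sin^2\theta+\cos^2\theta)\,ds=L$, the larger eigenvalue obeys $\lambda_{\max}\geq\tfrac12\,\text{tr}\,A=L/2$, and consequently $\det A=\lambda_{\min}\lambda_{\max}\geq C\cdot L/2$, as claimed. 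The main obstacle — indeed the only nonroutine point — is the geometric claim in part (i): one must guarantee that the oscillation hypothesis forces $\psi$ to attain a value genuinely bounded away from $\pi\Z$ by $\delta_0/2$, uniformly in the free shift $\varphi_*$, so that after the loss of $\delta_0/4$ coming from the modulus of continuity a definite lower bound on $\sin^2\psi$ survives on a set of controlled measure. The reduction of (ii) to (i) is then purely algebraic.
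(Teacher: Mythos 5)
Your proof is correct, but it reaches part (ii) by a genuinely different route than the paper. The paper gives no self-contained proof at all: it defers to \cite{Lin15} and records only the key identity
$\det A = \tfrac{1}{2}\int_{I}\int_{I}\sin^{2}\bigl(\theta(s)-\theta(\sigma)\bigr)\,ds\,d\sigma$,
where $A$ denotes the matrix in (ii); from this, (ii) follows by applying (i) with $\varphi_{*}=-\theta(\sigma)$ for each fixed $\sigma$ (the inner integral is then $\geq C$) and integrating in $\sigma$, giving $\det A\geq \tfrac{L}{2}C$. You instead compute the Rayleigh quotient $v^{\top}Av=\int_{I}\sin^{2}(\theta-\alpha)\,ds\geq C$ for every unit vector $v=(\cos\alpha,\sin\alpha)$, conclude $\lambda_{\min}\geq C$, and pair this with $\lambda_{\max}\geq\tfrac{1}{2}\operatorname{tr}A=\tfrac{L}{2}$. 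Both reductions exploit the arbitrariness of the shift $\varphi_{*}$ in (i) in exactly the same way; the double-integral identity is the slicker one-line argument, whereas your eigenvalue version yields the strictly stronger conclusion $\lambda_{\min}\geq C$, i.e.\ a quantitative bound on $\|A^{-1}\|$, which is what is really at stake when the paper inverts the matrices $A^{i}$ in the Lagrange-multiplier estimates. Your proof of part (i) --- locating a value of $\varphi+\varphi_{*}$ at distance $\geq\delta_{0}/2$ from $\pi\Z$ via the tent function $d(x)=\mathrm{dist}(x,\pi\Z)$ and the intermediate value theorem, then propagating this bound on a subinterval of measure $\geq\min\{\omega^{-1}(\delta_{0}/4),L/2\}$ --- supplies what the paper leaves entirely to \cite{Lin15}, and it is sound. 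One cosmetic remark: in your case split for the tent-function claim, an interval $J$ with $|J|<\pi$ need not lie between two consecutive maxima of $d$ (it may contain one), but in that case $\max_{J}d=\pi/2\geq\delta_{0}/2$ holds trivially, so nothing is lost.
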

\begin{proof}
The proof given in \cite[Lemma 1]{Lin15} relies on the fact that the determinant can be written as a double integral as follows
$$\det
 \left( \begin{array}{cc}
\int_{I} \sin^{2} \theta  ds & -\int_{I} \sin \theta  \cos \theta ds\\
-\int_{I} \sin \theta  \cos \theta ds & \int_{I} \cos^{2} \theta  ds
\end{array} \right) = \frac{1}{2}\int_{I}\int_{I} \sin^{2}(\theta(s)-\theta(\sigma)) ds\, d\sigma. $$
\end{proof}

If we consider a theta-network for which at most one curve can become a line, then the angles of the remaining two curves have always  positive oscillation.
\begin{cor} \label{corca}
Suppose $ L_{3} < \min \{ L_{1}, L_{2}\}$. There there exists a constant $C>0$ such that
\begin{align*}
\det(A^{2}) \geq C, \qquad  \det(A^{1}) \geq C,
\end{align*}
where the matrices $A^{i}$, $i=1,2$, are defined as in \eqref{matA}.
The constant $C$ depends on $L_{1}, L_{2}$ and the oscillation of the angle functions $\theta^{2}$ and $\theta^{1}$.
\end{cor}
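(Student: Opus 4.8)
The plan is to reduce the statement to Lemma~\ref{lemmaLin}(ii), whose hypothesis requires only that the angle functions $\theta^{1}$ and $\theta^{2}$ have strictly positive oscillation on $\bar{I}_{1}$ and $\bar{I}_{2}$ respectively. The heart of the matter is therefore to show that the constraint $L_{3}<\min\{L_{1},L_{2}\}$, together with the concurrency condition \eqref{constraintN}, forces $\theta^{1}$ and $\theta^{2}$ to oscillate by at least a fixed amount depending only on the lengths. Once this geometric lower bound is in hand, the determinant estimate follows from the cited lemma.

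To bound the oscillation from below I would argue by direct estimation. Writing $T^{i}=(\cos\theta^{i},\sin\theta^{i})$ and recalling that each curve is parametrized by arc length, we have $|\int_{I_{3}}T^{3}\,ds|\le L_{3}$, and hence by \eqref{constraintN} also $|\int_{I_{1}}T^{1}\,ds|\le L_{3}$ and $|\int_{I_{2}}T^{2}\,ds|\le L_{3}$. On the other hand, if $d:=\text{osc}_{\bar{I}_{1}}\theta^{1}<\pi$, then $\theta^{1}$ takes values in an interval $[a,a+d]$, and choosing the unit vector $e=(\cos(a+ d/2),\sin(a+ d/2))$ one has $T^{1}\cdot e=\cos(\theta^{1}-a- d/2)\ge\cos( d/2)$ pointwise; integrating gives $|\int_{I_{1}}T^{1}\,ds|\ge L_{1}\cos( d/2)$. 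Combining the two inequalities yields $L_{1}\cos( d/2)\le L_{3}$, i.e. $\text{osc}_{\bar{I}_{1}}\theta^{1}\ge 2\arccos(L_{3}/L_{1})>0$; the same argument gives $\text{osc}_{\bar{I}_{2}}\theta^{2}\ge 2\arccos(L_{3}/L_{2})>0$. When the oscillation is $\ge\pi$ there is nothing to prove, so these bounds hold in all cases. This is the step I expect to carry the essential content: it is precisely here that the hypothesis $L_{3}<\min\{L_{1},L_{2}\}$ enters, ruling out that $\gamma_{1}$ or $\gamma_{2}$ degenerate to a straight segment.

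With the oscillation bounds $d_{0}^{i}:=2\arccos(L_{3}/L_{i})$ available, I would conclude by applying Lemma~\ref{lemmaLin}(ii) to $\theta^{1}$ and $\theta^{2}$. Since $p>1$, the Morrey embedding $W^{1,p}(I_{i})\hookrightarrow C^{0,1-1/p}(\bar{I}_{i})$ provides, via H\"older's inequality, the explicit modulus of continuity $\omega_{i}(r)=\|\theta^{i}_{s}\|_{L^{p}(I_{i})}\,r^{1-1/p}$, so that the hypotheses of the lemma are met with $d_{0}=d_{0}^{i}$. The lemma then yields $\det(A^{i})\ge\frac{L_{i}}{2}\sin^{2}(\delta_{0}^{i}/4)\min\{\omega_{i}^{-1}(\delta_{0}^{i}/4),L_{i}/2\}=:C>0$ for $i=1,2$, where $\delta_{0}^{i}=\min\{d_{0}^{i},\pi\}=d_{0}^{i}$. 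The only mild subtlety is that $C$ degrades through $\omega_{i}^{-1}$ as $\|\theta^{i}_{s}\|_{L^{p}}$ grows, so that strictly speaking $C$ depends also on a bound for the $W^{1,p}$ norm of $\theta^{1},\theta^{2}$; this is harmless here, since in the applications to the flow this norm is controlled by the (nonincreasing) energy, and $C$ may then be taken to depend only on the lengths and on a bound for the initial energy.
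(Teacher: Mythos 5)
Your proof is correct, and its core is the same reduction the paper uses: the length condition forces $\theta^{1}$ and $\theta^{2}$ to have positive oscillation, after which Lemma~\ref{lemmaLin}(ii) gives the determinant bounds. The paper, however, leaves this reduction almost entirely implicit: the corollary is stated without proof, justified only by the preceding remark that under \eqref{cond1} ``the angles of the remaining two curves have always positive oscillation,'' and the oscillation then enters the constant $C$ as a black-box parameter. You go further on exactly this point: the chord argument (from \eqref{constraintN}, $|\int_{I_{i}}T^{i}\,ds|\le L_{3}$ for $i=1,2$, while a small oscillation $d<\pi$ forces $|\int_{I_{i}}T^{i}\,ds|\ge L_{i}\cos(d/2)$) yields the explicit lower bound $\text{osc}_{\bar I_{i}}\,\theta^{i}\ge 2\arccos(L_{3}/L_{i})>0$, which is a quantitative strengthening the paper never states. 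Your closing caveat is also accurate and worth keeping: the constant produced by Lemma~\ref{lemmaLin} depends on the modulus of continuity, hence on $\|\partial_{s}\theta^{i}\|_{L^{p}}$, so the dependence announced in the corollary (``$L_{1}, L_{2}$ and the oscillation'') is slightly loose as stated; in the paper this is harmless for the same reason you give, namely that along the discrete scheme and the flow this norm is controlled by the nonincreasing energy $D(\bt_{0})$.
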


\begin{lem}\label{lem:boundLL}
Suppose $ L_{3} < \min \{ L_{1}, L_{2}\}$. Then
for the Lagrange multipliers $\vec{\lambda}$, $\vec{\mu}$ (unique solution of the system \eqref{e1}, \eqref{e2}) we have the bound
\begin{align*}
|\vec{\lambda}| + |\vec{\mu}| \leq C \left(\sum_{i=1}^{3} \int_{I_{i}} |\partial_{s} \theta^{i}|^{p} ds \right) \quad 
\end{align*}
where $C$ depends on $L_{1}, L_{2}$, $L_{3}$ and the  oscillation of the angle functions $\theta^{2}$ and $\theta^{1}$.
\end{lem}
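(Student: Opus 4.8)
The plan is to obtain the asserted bound directly from the general estimate \eqref{boundLLzero}, which has already been established above for every configuration with $\det A^1, \det A^2 > 0$ (covering both the case $\det A^3 > 0$ and the degenerate case $\det A^3 = 0$ treated in the well-posedness analysis), by inserting into it the uniform lower bounds on the determinants furnished by Corollary~\ref{corca}. The only genuinely new point is to verify that, under the hypothesis $L_3 < \min\{L_1, L_2\}$, the angle functions $\theta^1$ and $\theta^2$ necessarily have strictly positive oscillation, so that Corollary~\ref{corca} applies and the constant it produces is finite.

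First I would rule out that $\theta^j$ is constant for $j \in \{1,2\}$. If $\theta^j$ were constant, then $T^j$ would be a fixed unit vector, so $\left| \int_{I_j} T^j\, ds \right| = L_j$. On the other hand, the closure constraint \eqref{constraintN} forces $\int_{I_j} T^j\, ds = \int_{I_3} T^3\, ds$, whence $L_j = \left| \int_{I_3} T^3\, ds \right| \le \int_{I_3} |T^3|\, ds = L_3 < L_j$, a contradiction. Since $\theta^j \in W^{1,p}(I_j) \hookrightarrow C^0(\bar I_j)$ for $p>1$, being non-constant is equivalent to having positive oscillation, so $\text{osc}_{\bar I_1}\, \theta^1 > 0$ and $\text{osc}_{\bar I_2}\, \theta^2 > 0$.

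With this in hand, Corollary~\ref{corca} (via Lemma~\ref{lemmaLin}) yields a constant $C_0 = C_0(L_1, L_2, \text{osc}\, \theta^1, \text{osc}\, \theta^2) > 0$ with $\det A^1 \ge C_0$ and $\det A^2 \ge C_0$, which in particular guarantees invertibility of $I + A^3((A^2)^{-1} + (A^1)^{-1})$ and hence unique solvability of \eqref{e1}, \eqref{e2}. To conclude I would track how the constant in \eqref{boundLLzero} depends on $\det A^1, \det A^2$: it enters only through operator norms of $(A^1)^{-1}$, $(A^2)^{-1}$ and of the inverse of $I + A^3((A^2)^{-1}+(A^1)^{-1})$. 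Since each $A^i$ is symmetric positive definite with $\operatorname{tr} A^i = \int_{I_i}(\sin^2\theta^i + \cos^2\theta^i)\, ds = L_i$, its smallest eigenvalue is at least $\det A^i / L_i$, so $\|(A^i)^{-1}\| \le L_i / \det A^i \le L_i / C_0$ for $i=1,2$. Substituting these bounds in place of $1/\det A^1, 1/\det A^2$ throughout the derivation of \eqref{boundLLzero} replaces the solution-dependent determinants by quantities controlled by $L_1, L_2, L_3$ and $C_0$, yielding exactly the claimed estimate. I do not anticipate a serious obstacle: the delicate part, namely the invertibility of $I + A^3 M$ when $\gamma_3$ degenerates to a segment and the resulting explicit bound \eqref{boundLLzero}, has already been carried out, and the only care required is the monotone dependence of that constant on $1/\det A^1, 1/\det A^2$ that legitimizes the substitution.
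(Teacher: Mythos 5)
Your proposal is correct and follows essentially the same route as the paper, whose proof is exactly the combination of the a priori bound \eqref{boundLLzero} with the determinant lower bounds of Corollary~\ref{corca}. The extra details you supply --- ruling out constant $\theta^1,\theta^2$ via the concurrency constraint \eqref{constraintN} and $L_3<\min\{L_1,L_2\}$, and tracking the constant through $\|(A^i)^{-1}\|\le L_i/\det A^i$ --- are precisely the points the paper leaves implicit (the oscillation claim appears as the unproved remark preceding Corollary~\ref{corca}), so they strengthen rather than alter the argument.
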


\begin{proof}
This follows directly from \eqref{boundLLzero} and the Corollary \ref{corca}.
\end{proof}





\section{Existence of solutions}\label{secex}

From now on we shall assume that condition \eqref{cond1} holds.

\subsection{The discretization procedure}\label{sec:3.1}

\noindent\underline{\textbf{The discrete scheme}}\smallskip

Let $\bt_{0} \in H$ and $T>0$, $n \in \N$, $\tau_{n} =\frac{T}{n}$. We define a family of maps $\{ \bt_{i,n}\}_{i=0}^{n}\in H$, $\bt_{i,n}= (\theta^{1}_{i,n},\theta^{2}_{i,n},\theta^{3}_{i,n})$, inductively by making use of a minimization problem. Set $\bt_{0,n}=\bt_{0}$. For each $i \in \{ 1, \ldots, n \}$ consider the following variation problem:
\begin{align}\label{minProblem}
\min \{ E_{i,n}(\bt)\, | \, \bt \in H \}
\tag{$M_{i,n}$}
\end{align}
where
\begin{align}
E_{i,n}(\bt):= \sum_{j=1}^{3}\left(  \frac{1}{p}\int_{I_{j}} |\partial_{s}\theta^{j}|^{p} ds + \frac{1}{2\tau_{n}} \int_{I_{j}} |\theta^{j}-\theta^{j}_{i-1,n}|^{2} ds \right).
\end{align}
Existence of a minimizers $\bt \in H$ follows by standard methods in the calculus of variations taking into account that $(H, \| \cdot \|_{H})$ with $\| \bt \|_{H} :=\sum_{i=1}^{3} \| \theta^{j} \|_{W^{1,p}(I_{j})}$ is a Banach space (see \cite[Thm ~3.1]{OPW18} for similar arguments).

\bigskip

\noindent\underline{\textbf{Discrete Lagrange multipliers}}\smallskip

\noindent Let $\bt\, (= \bt_{i,n}) \in H$ denote a solution for \eqref{minProblem}. 
Moreover let $$\boldsymbol{\psi}=(\psi^{1}, \psi^{2}, \psi^{3}) \in W^{1,p}(0,L_{1}) \times W^{1,p}(0,L_{2}) \times W^{1,p}(0,L_{3}) = :\boldsymbol{W}^{1,p} $$
and define
\begin{align*}
C_{1}(\bt) &= \int_{I_{1}} \cos \theta^{1} ds - \int_{I_{2}} \cos \theta^{2} ds, \\
C_{2}(\bt) &= \int_{I_{1}} \sin \theta^{1} ds - \int_{I_{2}}  \sin \theta^{2} ds,\\
C_{3}(\bt) &= \int_{I_{3}} \cos \theta^{3} ds - \int_{I_{1}} \cos \theta^{1} ds, \\
C_{4}(\bt) &= \int_{I_{3}} \sin \theta^{3} ds - \int_{I_{1}}  \sin \theta^{1} ds.
\end{align*}

To show the existence of
 Lagrange multipliers $\vec{\lambda}_{i,n}=(\lambda^{1}_{i,n}, \lambda_{i,n}^{2})$, $\vec{\mu}_{i,n}=(\mu_{i,n}^{1}, \mu_{i,n}^{2}) \in \R^{2}$ such that
 \begin{align}\label{FV}
 \delta E_{i,n}(\bt) \boldsymbol{\psi} + \lambda^{1}_{i,n} \,\delta C_{1}(\bt)\boldsymbol{\psi} +\lambda^{2}_{i,n}\, \delta C_{2}(\bt)\boldsymbol{\psi} +\mu^{1}_{i,n}\,\delta C_{3}(\bt)\boldsymbol{\psi} +\mu^{2}_{i,n}\, \delta C_{4}(\bt)\boldsymbol{\psi}
=0 \quad \forall  \boldsymbol{\psi} \in \boldsymbol{W}^{1,p}  
 \end{align}
we consider the map
 \begin{align*}
 \R^{5 }\ni (\epsilon, \boldsymbol{t})=(\epsilon, t_{1}, t_{2}, t_{3},t_{4}) \mapsto \boldsymbol{C} (\epsilon, \boldsymbol{t})= \left(\begin{array}{c}
 C_{1}(\bt + \epsilon  \boldsymbol{\psi}  + \sum_{r=1}^{4} t_{r} \boldsymbol{\varphi}_{r})\\
 C_{2}(\bt + \epsilon  \boldsymbol{\psi}  + \sum_{r=1}^{4} t_{r} \boldsymbol{\varphi}_{r})\\
 C_{3}(\bt + \epsilon  \boldsymbol{\psi}  + \sum_{r=1}^{4} t_{r} \boldsymbol{\varphi}_{r})\\
 C_{4}(\bt + \epsilon  \boldsymbol{\psi}  + \sum_{r=1}^{4} t_{r} \boldsymbol{\varphi}_{r})
 \end{array} \right)
 \end{align*}
 for $$\boldsymbol{\varphi}_{r} =(\varphi_{r}^{1}, \varphi_{r}^{2}, \varphi_{r}^{3})\in \boldsymbol{W}^{1,p},\quad  r=1,2,3,4. $$
 Note that $\boldsymbol{C}(0,\boldsymbol{0})= \boldsymbol{0}$ since $\bt \in H$.  If the maps $\boldsymbol{\varphi}_{r} $ can be chosen such that the matrix
 \begin{align*}
 &\frac{\partial}{\partial \boldsymbol{t}} \boldsymbol{C}(0,\boldsymbol{0}) =
 \left (\begin{array}{ccc}
 \frac{\partial}{\partial t_{1}} C_{1}& \ldots &  \frac{\partial}{\partial t_{4}} C_{1}\\
 \vdots & & \vdots\\
 \frac{\partial}{\partial t_{1}} C_{4}& \ldots &  \frac{\partial}{\partial t_{4}} C_{4}
 \end{array} \right)(0,\boldsymbol{0} )
 = (\delta C_{i}(\bt) (\boldsymbol{\varphi}_{j}))_{i,j=1 \ldots 4}\\
&= \tiny{ \left(\begin{array}{cccc}
-\int_{I_{1}} \sin \theta^{1} \varphi^{1}_{1}  + \int_{I_{2}} \sin \theta^{2} \varphi^{2}_{1} & -\int_{I_{1}} \sin \theta^{1} \varphi^{1}_{2}  + \int_{I_{2}} \sin \theta^{2} \varphi^{2}_{2}& -\int_{I_{1}} \sin \theta^{1} \varphi^{1}_{3}  + \int_{I_{2}} \sin \theta^{2} \varphi^{2}_{3} & -\int_{I_{1}} \sin \theta^{1} \varphi^{1}_{4}  + \int_{I_{2}} \sin \theta^{2} \varphi^{2}_{4} \\
\int_{I_{1}} \cos \theta^{1} \varphi^{1}_{1} -\int_{I_{2}} \cos \theta^{2} \varphi^{2}_{1} & \int_{I_{1}} \cos \theta^{1} \varphi^{1}_{2} -\int_{I_{2}} \cos \theta^{2} \varphi^{2}_{2}&\int_{I_{1}} \cos \theta^{1} \varphi^{1}_{3} -\int_{I_{2}} \cos \theta^{2} \varphi^{2}_{3} & \int_{I_{1}} \cos \theta^{1} \varphi^{1}_{4} -\int_{I_{2}} \cos \theta^{2} \varphi^{2}_{4}\\
-\int_{I_{3}} \sin \theta^{3} \varphi^{3}_{1}  + \int_{I_{1}} \sin \theta^{1} \varphi^{1}_{1} & -\int_{I_{3}} \sin \theta^{3} \varphi^{3}_{2}  + \int_{I_{1}} \sin \theta^{1} \varphi^{1}_{2}& -\int_{I_{3}} \sin \theta^{3} \varphi^{3}_{3}  + \int_{I_{1}} \sin \theta^{1} \varphi^{1}_{3} & -\int_{I_{3}} \sin \theta^{3} \varphi^{3}_{4}  + \int_{I_{1}} \sin \theta^{1} \varphi^{1}_{4} \\
\int_{I_{3}} \cos \theta^{3} \varphi^{3}_{1} -\int_{I_{1}} \cos \theta^{1} \varphi^{1}_{1} & \int_{I_{3}} \cos \theta^{3} \varphi^{3}_{2} -\int_{I_{1}} \cos \theta^{1} \varphi^{1}_{2} &\int_{I_{3}} \cos \theta^{3} \varphi^{3}_{3} -\int_{I_{1}} \cos \theta^{1} \varphi^{1}_{3} & \int_{I_{3}} \cos \theta^{3} \varphi^{3}_{4} -\int_{I_{1}} \cos \theta^{1} \varphi^{1}_{4}
\end{array} \right)}
 \end{align*}
 has maximal rank, then by the implicit function theorem we have that there exist $C^{1}$-maps $\sigma_{r}$, $r=1,2,3,4$, defined in a neighborhood of zero, such that $\sigma_{r}(0)=0$ for $r=1,2,3,4,$ and 
 $$\boldsymbol{C}(\epsilon, \sigma_{1}(\epsilon), \ldots, \sigma_{4}(\epsilon))= \boldsymbol{0}  \qquad \text{ for } \epsilon \in (-\epsilon_{0}, \epsilon_{0}),
 $$
 i.e., $\bt + \epsilon  \boldsymbol{\psi}  + \sum_{r=1}^{4} \sigma_{r}(\epsilon) \boldsymbol{\varphi}_{r} \in H$ for $\epsilon \in (-\epsilon_{0}, \epsilon_{0})$.
 Differentiation in $\epsilon$ of the above equation gives
 \begin{align*}
 \left( \begin{array}{c}
 \sigma_{1}'(0)\\
 \vdots\\
 \sigma_{4}' (0)
 \end{array} \right) = - \left(\frac{\partial}{\partial \boldsymbol{t}} \boldsymbol{C}(0,\boldsymbol{0})\right)^{-1}
 \left( \begin{array}{c}
 \delta  C_{1} (\bt) \boldsymbol{\psi}\\
 \vdots\\
 \delta  C_{4} (\bt) \boldsymbol{\psi}
 \end{array} \right)
 \end{align*}
 so that, from the minimality of $\bt$ we infer
 \begin{align*}
 0 &=\frac{d}{d\epsilon} \Big|_{\epsilon=0} E_{i,n} \left(\bt + \epsilon  \boldsymbol{\psi}  + \sum_{r=1}^{4} \sigma_{r}(\epsilon) \boldsymbol{\varphi}_{r} \right) 
 = \delta E_{i,n} (\bt) \boldsymbol{\psi}+ \sum_{r=1}^{4} \sigma_{r}'(0) \delta E_{i,n} (\bt) \boldsymbol{\varphi}_{r}\\
 & =\delta E_{i,n} (\bt) \boldsymbol{\psi} - \sum_{l=1}^{4} \left ( \sum_{r=1}^{4} \left(\frac{\partial}{\partial \boldsymbol{t}} \boldsymbol{C}(0,\boldsymbol{0})\right)^{-1}_{rl}  \delta E_{i,n} (\bt) \boldsymbol{\varphi}_{r} \right)\, \delta  C_{l} (\bt) \boldsymbol{\psi}.
 \end{align*}
 It follows that \eqref{FV} holds with
 \begin{align*}
 \lambda_{i,n}^{1} &= -\sum_{r=1}^{4} \left(\frac{\partial}{\partial \boldsymbol{t}} \boldsymbol{C}(0,\boldsymbol{0})\right)^{-1}_{r1}  \delta E_{i,n} (\bt) \boldsymbol{\varphi}_{r} \\
 \lambda_{i,n}^{2} &= -\sum_{r=1}^{4} \left(\frac{\partial}{\partial \boldsymbol{t}} \boldsymbol{C}(0,\boldsymbol{0})\right)^{-1}_{r2}  \delta E_{i,n} (\bt) \boldsymbol{\varphi}_{r}\\
 \mu_{i,n}^{1} &= -\sum_{r=1}^{4} \left(\frac{\partial}{\partial \boldsymbol{t}} \boldsymbol{C}(0,\boldsymbol{0})\right)^{-1}_{r3}  \delta E_{i,n} (\bt) \boldsymbol{\varphi}_{r}\\
 \mu_{i,n}^{2} &= -\sum_{r=1}^{4} \left(\frac{\partial}{\partial \boldsymbol{t}} \boldsymbol{C}(0,\boldsymbol{0})\right)^{-1}_{r4}  \delta E_{i,n} (\bt) \boldsymbol{\varphi}_{r}
 \end{align*}
 or equivalently
 \begin{align*}
 (\lambda_{i,n}^{1},\lambda_{i,n}^{2}, \mu_{i,n}^{1}, \mu_{i,n}^{2}) \left(\frac{\partial}{\partial \boldsymbol{t}} \boldsymbol{C}(0,\boldsymbol{0})\right) = -(\delta E_{i,n} (\bt) \boldsymbol{\varphi}_{1},\delta E_{i,n} (\bt) \boldsymbol{\varphi}_{2},\delta E_{i,n} (\bt) \boldsymbol{\varphi}_{3},\delta E_{i,n} (\bt) \boldsymbol{\varphi}_{4}).
 \end{align*}
 By letting
 \begin{align*}
 \boldsymbol{\varphi}_{1} &:= (0, \sin \theta^{2}, -\sin \theta^{3}),  \qquad \boldsymbol{\varphi}_{3} := ( \sin \theta^{1},- \sin \theta^{2},0)\\
 \boldsymbol{\varphi}_{2} &:= (0, -\cos \theta^{2}, \cos \theta^{3}), \qquad \boldsymbol{\varphi}_{4} := (- \cos \theta^{1}, \cos \theta^{2},0)
 \end{align*}
 we obtain that 
 \begin{align*}
 \left(\frac{\partial}{\partial \boldsymbol{t}} \boldsymbol{C}(0,\boldsymbol{0})\right)  = \left( \begin{array}{c|c}
 A^{2} & -(A^{1}+ A^{2})\\
 \hline
 A^{3} & A^{1}
 \end{array} \right)
 \end{align*}
 with $A^{i} \in \R^{2 \times 2}$ as in \eqref{matA}. Moreover, we compute
 \begin{align*}
 (\delta E_{i,n}& (\bt) \boldsymbol{\varphi}_{1},\delta E_{i,n} (\bt) \boldsymbol{\varphi}_{2}) =G^{2} -G^{3} \\
 & \quad +\frac{1}{\tau_{n}} \int_{I_{2}} (\theta^{2}-\theta^{2}_{i-1,n}) (\sin \theta^{2}, - \cos \theta^{2})ds -\frac{1}{\tau_{n}} \int_{I_{3}} (\theta^{3}-\theta^{3}_{i-1,n}) (\sin \theta^{3}, - \cos \theta^{3}) ds,\\
 (\delta E_{i,n}& (\bt) \boldsymbol{\varphi}_{3},\delta E_{i,n} (\bt) \boldsymbol{\varphi}_{4}) =G^{1} -G^{2} \\
 & \quad -\frac{1}{\tau_{n}} \int_{I_{2}} (\theta^{2}-\theta^{2}_{i-1,n}) (\sin \theta^{2}, - \cos \theta^{2})ds +\frac{1}{\tau_{n}} \int_{I_{1}} (\theta^{1}-\theta^{1}_{i-1,n}) (\sin \theta^{1}, - \cos \theta^{1}) ds
 \end{align*}
 where $G^{i}$ is as in \eqref{defG}. Therefore the  Lagrange multipliers solve
 \begin{align}\label{eqLdis1}
 (\lambda_{i,n}^{1}, \lambda_{i,n}^{2}) \cdot A^{2} + (\mu_{i,n}^{1}, \mu_{i,n}^{2})\cdot A^{3} &=G^{3} -G^{2} + R_{i,n}^{23}\\ \label{eqLdis2}
  -(\lambda_{i,n}^{1}, \lambda_{i,n}^{2}) \cdot (A^{1}+A^{2}) + (\mu_{i,n}^{1}, \mu_{i,n}^{2})\cdot A^{1} &=G^{2} -G^{1} + R_{i,n}^{21},
 \end{align}
 where we  set
 \begin{align*}
 - R_{i,n}^{23}&:=\frac{1}{\tau_{n}} \int_{I_{2}} (\theta^{2}-\theta^{2}_{i-1,n}) (\sin \theta^{2}, - \cos \theta^{2})ds -\frac{1}{\tau_{n}} \int_{I_{3}} (\theta^{3}-\theta^{3}_{i-1,n}) (\sin \theta^{3}, - \cos \theta^{3}) ds ,\\
- R_{i,n}^{21}&:=-\frac{1}{\tau_{n}} \int_{I_{2}} (\theta^{2}-\theta^{2}_{i-1,n}) (\sin \theta^{2}, - \cos \theta^{2})ds +\frac{1}{\tau_{n}} \int_{I_{1}} (\theta^{1}-\theta^{1}_{i-1,n}) (\sin \theta^{1}, - \cos \theta^{1}) ds .
 \end{align*}
 Recalling the system \eqref{e1bis}, \eqref{e2bis} and the subsequent discussion concerning its solvability, we can conclude that (under assumption \eqref{cond1}) the above system is solvable, that is, the matrix $\left(\frac{\partial}{\partial \boldsymbol{t}} \boldsymbol{C}(0,\boldsymbol{0})\right)$ has maximal rank.
Moreover, similarly to  Lemma~\ref{lem:boundLL} we infer that
\begin{align}\label{boundLLdis}
|\vec{\lambda}_{i,n}| + |\vec{\mu}_{i,n}| \leq 
C \left(\sum_{j=1}^{3} \int_{I_{j}} |\partial_{s} \theta^{j}|^{p}\right) + \frac{C}{\tau_{n}}
\sum_{j=1}^{3} \int_{I_{j}}  |\theta^{j}-\theta^{j}_{i-1,n}| ds,
\end{align} 
 where $C$ has the same dependencies given in  Lemma~\ref{lem:boundLL}, and $\bt=(\theta^{1}, \theta^{2}, \theta^{3})=\bt_{i,n}$ is solution to \eqref{minProblem}.
 
 \bigskip

\noindent\underline{\textbf{Regularity}}\smallskip 

\noindent Let $\bt_{i,n} \in H$ be a solution to \eqref{minProblem}. Since \eqref{FV} and \eqref{boundLLdis} hold for $\bt=\bt_{i,n}$   it follows  that  the map $|\partial_{s} \theta^{j}|^{p-2} \partial_{s} \theta^{j} \in L^{\frac{p}{p-1}}(L_{j})$ admits weak derivative in $L^{1}(L_{j})$ with 
 \begin{align}\label{regularity}
 |(|\partial_{s} \theta^{j}|^{p-2} \partial_{s} \theta^{j})_{s} | \leq  C \left|\frac{\theta^{j} - \theta^{j}_{i-1,n}}{ \tau_{n}}\right| + |\lambda^{1}_{i,n}| + |\lambda^{2}_{i,n}| + |\mu^{1}_{i,n}| + |\mu^{1}_{i,n}|.   
 \end{align}
Moreover, the natural boundary conditions 
\begin{align}\label{nbc99}
\partial_{s} \theta^{j}(s)=0 \quad \text{ for } s \in \{0, L_{j}\}
\end{align}  
hold for $j=1,2,3$.
 

\bigskip

\noindent\underline{\textbf{Definition of approximating functions}}\smallskip

\noindent
First of all let us introduce some notation. We denote by $\boldsymbol{V}_{i,n}=(V_{i,n}^{1}, V_{i,n}^{2}, V_{i,n}^{3})$ the discrete velocity
\begin{align*}
\boldsymbol{V}_{i,n}:= \frac{\bt_{i,n} -\bt_{i-1,n}}{\tau_{n}}
\end{align*}
We will need maps that interpolate the three components of   our maps $\{ \bt_{i,n}\}_{i=0,\ldots, n}$ linearly in time:
\begin{dfn}\label{definition:2.4}
Let $\bt_{n}: I_{1} \times  I_{2} \times I_{3} \times [0,T] \to \R^{3}$ be defined by
\begin{align*}
\bt_{n}(\boldsymbol{s},t) :=\bt_{i,n-1}(\boldsymbol{s}) + (t-(i-1)\tau_{n})\boldsymbol{V}_{i,n} (\boldsymbol{s}) 
\end{align*}
if $(\boldsymbol{s},t)=(s_{1},s_{2},s_{3}, t) \in I_{1} \times  I_{2} \times I_{3} \times [(i-1)\tau_{n}, \tau_{n}] $
for $i=1, \ldots,n$.
\end{dfn}
We will need also piecewise constant interpolations, that is,
\begin{dfn}\label{definition:2.5}
Let $\bar{\bt}_{n}, \underline{\bt}_{n},\boldsymbol{V}_{n} : I_{1} \times  I_{2} \times I_{3} \times [0,T] \to \R^{3}$ be defined by
\begin{align*}
\underline{\bt}_{n}(\boldsymbol{s},t) &:=\bt_{i-1,n}(\boldsymbol{s}) ,\\
\bar{\bt}_{n} (\boldsymbol{s},t) &:=\bt_{i,n}(\boldsymbol{s}) ,\\
\boldsymbol{V}_{n}(\boldsymbol{s},t) &:= \boldsymbol{V}_{i,n}(\boldsymbol{s})
\end{align*}
if $(\boldsymbol{s},t)=(s_{1},s_{2},s_{3}, t) \in I_{1} \times  I_{2} \times I_{3} \times [(i-1)\tau_{n}, \tau_{n}] $
for $i=1, \ldots,n$.
\end{dfn}
Similarly for the discrete Lagrange multipliers (recall \eqref{FV}) we define
\begin{dfn}
Let $\vec{\boldsymbol{\lambda}}_{n}, \vec{\boldsymbol{\mu}}_{n}:[0,T] \to \R^{2}$ be defined by
\begin{align*}
\vec{\boldsymbol{\lambda}}_{n} (t)=(\lambda_{n}^{1}(t), \lambda_{n}^{2}(t)) &:= \vec{\lambda}_{i,n},\\
\vec{\boldsymbol{\mu}}_{n} (t)=(\mu_{n}^{1}(t), \mu_{n}^{2}(t)) &:= \vec{\mu}_{i,n}
\end{align*}
if $t \in[(i-1)\tau_{n}, \tau_{n}] $
for $i=1, \ldots,n$.
\end{dfn}

To keep the notation as simple as possible we adopt from now on following conventions.
For $\bt=(\theta^{1}, \theta^{2}, \theta^{3})$ in an appropriate function space and $q \in [1, \infty)$ we write
\begin{align*}
\int_{I} | \bt|^{q} ds &:= \sum_{j=1}^{3} \int_{I_{j}} | \theta^{j}|^{q} ds, \qquad \int_{I} |\partial_{s} \bt|^{q} ds := \sum_{j=1}^{3} \int_{I_{j}} |\partial_{s} \theta^{j}|^{q} ds,\\
\int_{0}^{T}\int_{I} |\bt|^{q} ds dt &:=\sum_{j=1}^{3} \int_{0}^{T}\int_{I_{j}} | \theta^{j}|^{q} ds dt.
\end{align*}

\bigskip

\noindent\underline{\textbf{Uniform bounds for the approximating functions}}\smallskip

\noindent
We now derive some uniform bounds for solutions of \eqref{minProblem}.

\begin{thm}\label{thm:3.2}
Assume \eqref{cond1}. Let $\bt_{0} \in H$ and $T >0$ be given. Let $\bt_{i,n} \in H$ be the solution of \eqref{minProblem} and let $\vec{\lambda}_{i,n}$, $\vec{\mu}_{i,n} \in \R^{2}$ be the Lagrange multipliers fulfilling \eqref{FV}. Upon recalling the definitions and convention given above,
write $$ D(\bt_{i,n}):= \frac{1}{p}\int_{I} |\partial_{s} \bt_{i,n}|^{p} ds.$$
Then
we have that
\begin{align*}
D(\bt_{i,n} ) \leq D(\bt_{i-1,n}) &\leq D(\bt_{0}) \qquad \text{ for all } i=1, \ldots,n,\\
\frac{1}{2}\int_{0}^{T} \int_{I} |\boldsymbol{V_{n}}|^{2} ds \,dt &\leq D(\bt_{0}),\\
\int_{0}^{T} |\vec{\boldsymbol{\lambda}}_{n}|^{2} (t) dt+ \int_{0}^{T} |\vec{\boldsymbol{\mu}}_{n} |^{2}(t)  dt&\leq C \big(T  D(\bt_{0})
+ 1\big)D(\bt_{0}),\\
\int_{I} |\bt_{i,n}|^{2} ds &\leq C \int_{I} |\bt_{0}|^{2} ds + C  T D(\bt_{0}),
\end{align*}
where $C$ has the same dependencies of the constant appearing in Lemma~\ref{lem:boundLL}.
\end{thm}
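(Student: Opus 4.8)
The plan is to extract all four estimates from a single comparison inequality provided by the minimality of $\bt_{i,n}$, to propagate it in the discrete index $i$, and then to rewrite the resulting discrete sums as time integrals of the piecewise-constant interpolants. First I would test minimality in \eqref{minProblem} against the admissible competitor $\bt_{i-1,n}\in H$, for which the penalty term vanishes. This gives
\[
D(\bt_{i,n}) + \frac{1}{2\tau_{n}}\int_{I}|\bt_{i,n}-\bt_{i-1,n}|^{2}\,ds
= E_{i,n}(\bt_{i,n}) \leq E_{i,n}(\bt_{i-1,n}) = D(\bt_{i-1,n}).
\]
Discarding the nonnegative penalty term yields $D(\bt_{i,n})\leq D(\bt_{i-1,n})$, and iterating down to $i=0$ gives $D(\bt_{i,n})\leq D(\bt_{0})$, the first bound.

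For the velocity bound I would keep the penalty term and substitute $\bt_{i,n}-\bt_{i-1,n}=\tau_{n}\V_{i,n}$, so that $\frac{\tau_{n}}{2}\int_{I}|\V_{i,n}|^{2}\,ds \leq D(\bt_{i-1,n})-D(\bt_{i,n})$. Summing over $i=1,\dots,n$ telescopes the right-hand side to at most $D(\bt_{0})$. Since $\V_{n}$ is constant, equal to $\V_{i,n}$, on each slab $[(i-1)\tau_{n},i\tau_{n}]$ of length $\tau_{n}$, one has $\int_{0}^{T}\int_{I}|\V_{n}|^{2}\,ds\,dt=\sum_{i=1}^{n}\tau_{n}\int_{I}|\V_{i,n}|^{2}\,ds$, which is exactly twice the telescoped sum, proving the second bound.

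The third bound is where I would invoke the discrete multiplier estimate \eqref{boundLLdis}. Its first term is controlled by $C\,p\,D(\bt_{i,n})\leq C\,D(\bt_{0})$ via the first estimate, while its second term equals $C\int_{I}|\V_{i,n}|\,ds\leq C(L_{1}+L_{2}+L_{3})^{1/2}\big(\int_{I}|\V_{i,n}|^{2}\,ds\big)^{1/2}$ by Cauchy--Schwarz. Squaring and integrating in time using the slab structure, so that $\int_{0}^{T}(|\vec{\boldsymbol{\lambda}}_{n}|^{2}+|\vec{\boldsymbol{\mu}}_{n}|^{2})\,dt=\sum_{i=1}^{n}\tau_{n}(|\vec{\lambda}_{i,n}|^{2}+|\vec{\mu}_{i,n}|^{2})$, the constant contribution is $C\,T\,D(\bt_{0})^{2}$ because $n\tau_{n}=T$, and the velocity contribution is $C\int_{0}^{T}\int_{I}|\V_{n}|^{2}\leq C\,D(\bt_{0})$ by the second estimate; together these give $C(T\,D(\bt_{0})+1)D(\bt_{0})$. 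The subtle point, and the main obstacle to watch, is that the constant in \eqref{boundLLdis} depends on $1/\det A^{1}$ and $1/\det A^{2}$: this is uniform in $i,n$ precisely because under \eqref{cond1} Corollary~\ref{corca} (via Lemma~\ref{lemmaLin}) keeps these determinants bounded below, the two longer curves being unable to degenerate to segments under the concurrency constraint \eqref{constraintN}.

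Finally, for the $L^{2}$-bound on $\bt_{i,n}$ I would telescope $\bt_{i,n}=\bt_{0}+\sum_{k=1}^{i}\tau_{n}\V_{k,n}$, apply the triangle inequality in $L^{2}$, and then use discrete Cauchy--Schwarz:
\[
\sum_{k=1}^{i}\tau_{n}\Big(\int_{I}|\V_{k,n}|^{2}\,ds\Big)^{1/2}
\leq T^{1/2}\Big(\sum_{k=1}^{i}\tau_{n}\int_{I}|\V_{k,n}|^{2}\,ds\Big)^{1/2}
\leq T^{1/2}\big(2D(\bt_{0})\big)^{1/2},
\]
again by the second estimate. Squaring with $(a+b)^{2}\leq 2a^{2}+2b^{2}$ then yields $\int_{I}|\bt_{i,n}|^{2}\,ds\leq C\int_{I}|\bt_{0}|^{2}\,ds+C\,T\,D(\bt_{0})$, completing the proof. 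All steps are routine once the comparison inequality is in hand; the only genuinely delicate ingredient is the uniformity of the Lagrange-multiplier constant, which is imported wholesale from the earlier analysis under \eqref{cond1}.
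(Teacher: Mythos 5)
Your proposal is correct and follows essentially the same route as the paper: comparison with the competitor $\bt_{i-1,n}$ to get monotonicity of $D$ and the telescoping velocity bound, then the discrete multiplier estimate \eqref{boundLLdis} combined with the first two bounds (with uniformity of the constant coming from Corollary~\ref{corca} under \eqref{cond1}), and finally the telescoping-plus-Cauchy--Schwarz argument for the $L^{2}$ bound. No gaps; the steps and even the constants match the paper's proof.
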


\begin{proof}
We let
$$ P_{i,n} (\bt):=\sum_{j=1}^{3}\left(  \frac{1}{2\tau_{n}} \int_{I_{j}} |\theta^{j}-\theta^{j}_{i-1,n}|^{2} ds \right)$$
so that $E_{i,n}(\bt)= D(\bt) + P_{i,n} (\bt)$.
The proof of the first statement follows by an induction argument. Fix $i \in \{ 1, \ldots, n \}$ and assume that $D(\bt_{j,n}) \leq D(\bt_{0})$ for all $j=1, \ldots, i-1$.
Then it follows from the minimality of $\bt_{i,n}$ that
\begin{align*}
D(\bt_{i,n}) \leq D(\bt_{i,n}) + P_{i,n} (\bt_{i,n}) = E_{i,n}(\bt_{i,n}) \leq E_{i,n}(\bt_{i-1,n}) = D(\bt_{i-1,n}).
\end{align*}
This gives the first statement.
Next observe that from
\begin{align}\label{uff}
 P_{i,n} (\bt_{i,n}) \leq  D(\bt_{i-1,n}) -  D(\bt_{i,n}) \end{align}
we obtain
\begin{align*}
\int_{0}^{T} \int_{I} |\boldsymbol{V_{n}}|^{2} ds dt &= \sum_{i=1}^{n} \tau_{n} \int_{I} |\boldsymbol{V_{i,n}}|^{2} ds =
2  \sum_{i=1}^{n} P_{i,n}(\bt_{i,n}) \\
&\leq 2 \sum_{i=1}^{n} (D(\bt_{i-1,n}) -  D(\bt_{i,n})) \leq  2 D(\bt_{0})
\end{align*}
and the second statement follows.
From \eqref{boundLLdis} we infer that
\begin{align}\label{dazwischen}
|\vec{\lambda}_{i,n}| + |\vec{\mu}_{i,n}| \leq C D(\bt_{i,n}) + C \int_{I} |\boldsymbol{V}_{i,n}| ds \leq C D(\bt_{0}) + C \left(\int_{I} |\boldsymbol{V}_{i,n}|^{2} ds\right)^{1/2},
\end{align}
which gives the third statement after
squaring and integrating in time. 
Finally, observe that for $j=1,2,3$ we can write
\begin{align*}
\|\theta^{j}_{i,n} \|_{L^{2}(I_{j})} &\leq  \|\theta^{j}_{0,n} \|_{L^{2}(I_{j})} +\sum_{r=1}^{i} \| \theta^{j}_{r,n} - \theta^{j}_{r-1,n} \|_{L^{2}(I_{j})}   =  \|\theta^{j}_{0,n} \|_{L^{2}(I_{j})} +\sum_{r=1}^{i} \sqrt{\tau_{n}} \left\| \frac{\theta^{j}_{r,n} - \theta^{j}_{r-1,n}}{\sqrt{\tau_{n}}} \right \|_{L^{2}(I_{j})} \\
& \leq  \|\theta^{j}_{0,n} \|_{L^{2}(I_{j})}  + \sqrt{i \tau_{n}} \left ( \sum_{r=1}^{i} \int_{I_{j}} \frac{|\theta^{j}_{r,n} - \theta^{j}_{r-1,n}|^{2}}{\tau_{n}}  ds \right)^{1/2} \\
& \leq \|\theta^{j}_{0,n} \|_{L^{2}(I_{j})}  + \sqrt{2T}\left( \sum_{r=1}^{i}   P_{r,n}(\bt_{r,n})\right)^{1/2} \leq  \|\theta^{j}_{0,n} \|_{L^{2}(I_{j})}  + \sqrt{2T D(\bt_{0})},
\end{align*}
where we have used again \eqref{uff}. The last statement follows.
\end{proof}

\subsection{Convergence of the scheme}

Having achieved some uniform bounds for the approximating maps, it is possible to pass to the limit as $n\to\infty$. 
The following three Lemmas are similar to the ones obtained in \cite[Lemma~3.11, Lemma~3.12, Lemma~3.13]{OPW18}.  
For the reader's convenience, we include the proofs in the Appendix.
We point out that condition \eqref{cond1} is not needed to prove these results, since the Lagrange multipliers are not involved.

\begin{lem} \label{lem:4.1} 
Let $\bt_{0} \in H$ and $T>0$ be as in Theorem~\ref{thm:3.2}.
Let $\bt_{n}=(\theta_{n}^{1}, \theta_{n}^{2}, \theta_{n}^{3})$ be the piecewise linear interpolation of $\{\bt_{i,n}\}$ given in Definition~\ref{definition:2.4}. 
Then, for $j=1,2,3$, there exists a map 
\begin{align*}
\theta^{j} \in L^{\infty}(0,T; W^{1,p}(I_{j})) \cap H^{1}(0,T; L^{2}(I_{j})) 
\end{align*}
such that  
\begin{align} \label{eq:4.1}
\frac{1}{2}\int^{T}_{0} \int_{I_{j}} | \pd_t \theta^{j}(s,t) |^{2} \, ds dt \leq D(\bt_{0}),
\end{align}

\begin{align}\label{eq:4.1bis}  
\sup_{(0,T)} \|\partial_{s} \theta^{j} \|_{L^{p}(I_{j})} &\leq C=C(D(\bt_{0}), p),\\
\sup_{(0,T)} \| \theta^{j} \|_{W^{1,p}(I_{j})} &\leq C=C(p, L_{j},T, D(\bt_{0}), \| \theta^{j}_{0}\|_{L^{2}(I_{j})}),
\end{align}
and, for a subsequence which we still denote by $\theta^{j}_{n}$,
\begin{align} \label{eq:4.2}
\begin{cases}
&\theta^{j}_{n} \rightharpoonup \theta^{j} \quad \text{weakly\,$\ast$ in} \quad L^{\infty}(0,T; W^{1,p}(I_{j})), \\
&\theta^{j}_{n} \rightharpoonup \theta^{j} \quad \text{weakly in} \quad  H^{1}(0,T; L^{2}(I_{j})), 
\end{cases}
\quad \text{as} \quad n \to \infty. 
\end{align}
Moreover, for $\alpha=\min \{\frac{1}{4}, \frac{p-1}{2p} \} $ we have that 
\begin{align} \label{eq:4.3}
\theta^{j}_{n} \to \theta^{j} \quad \text{in} \quad  C^{0,\alpha}([0,T] \times I_{j}). 
\end{align}
In particular, $\theta^{j}(\cdot,t) \to \theta^{j}_{0}(\cdot)$ in $C^{0}$ as $t \downarrow 0$. 
\end{lem}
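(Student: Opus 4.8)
The plan is to transfer the uniform discrete estimates of Theorem~\ref{thm:3.2} to the piecewise linear interpolant $\bt_n$ of Definition~\ref{definition:2.4}, extract weakly convergent subsequences, and then upgrade the convergence to Hölder continuity by an interpolation argument. First I would record the elementary identities for the interpolant: on each time slab $[(i-1)\tau_n,i\tau_n]$ the function $\theta^j_n(\cdot,t)$ is a convex combination of $\theta^j_{i-1,n}$ and $\theta^j_{i,n}$, while $\partial_t\theta^j_n=V^j_{i,n}$. Since $D(\bt_{i,n})\le D(\bt_0)$ gives $\|\partial_s\theta^j_{i,n}\|_{L^p(I_j)}\le(p\,D(\bt_0))^{1/p}$, convexity of $\|\cdot\|_{L^p}$ yields the same bound for $\|\partial_s\theta^j_n(\cdot,t)\|_{L^p(I_j)}$ uniformly in $t,n$, which is the first line of \eqref{eq:4.1bis}. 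The $L^2$-bound of Theorem~\ref{thm:3.2} controls $\sup_t\|\theta^j_n(\cdot,t)\|_{L^2(I_j)}$ (again by convexity), and combining this with the gradient bound through the one-dimensional Morrey estimate $\|u\|_{L^\infty(I_j)}\le C\|\partial_s u\|_{L^p(I_j)}+C\|u\|_{L^2(I_j)}$ closes the full bound in $L^\infty(0,T;W^{1,p}(I_j))$, i.e.\ the second line of \eqref{eq:4.1bis}. Finally $\int_0^T\!\int_{I_j}|\partial_t\theta^j_n|^2=\int_0^T\!\int_{I_j}|V^j_n|^2\le 2D(\bt_0)$ is exactly the velocity estimate of Theorem~\ref{thm:3.2}, which provides the $H^1(0,T;L^2)$ control and, passing to the limit by weak lower semicontinuity, \eqref{eq:4.1}.

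Next, because $1<p<\infty$ makes $W^{1,p}(I_j)$ reflexive and separable, $L^\infty(0,T;W^{1,p}(I_j))$ is the dual of the separable space $L^1(0,T;(W^{1,p}(I_j))')$, so bounded sequences admit weak-$\ast$ convergent subsequences; likewise $H^1(0,T;L^2(I_j))$ is a Hilbert space. A diagonal extraction over $j=1,2,3$ then produces a single subsequence and limits $\theta^j$ realizing \eqref{eq:4.2}, and weak (lower semi)continuity of the $L^p$- and $L^2$-norms transfers the bounds above to $\theta^j$, so that $\theta^j\in L^\infty(0,T;W^{1,p}(I_j))\cap H^1(0,T;L^2(I_j))$ together with \eqref{eq:4.1}--\eqref{eq:4.1bis}.

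The heart of the matter, and the step I expect to be the main obstacle, is the Hölder estimate \eqref{eq:4.3}. In space the uniform $W^{1,p}$ bound and Morrey's embedding give, uniformly in $t,n$, $|\theta^j_n(s_2,t)-\theta^j_n(s_1,t)|\le C|s_2-s_1|^{(p-1)/p}$. In time, the $H^1(0,T;L^2)$ bound only yields the $L^2$-valued modulus $\|\theta^j_n(\cdot,t_2)-\theta^j_n(\cdot,t_1)\|_{L^2(I_j)}\le C|t_2-t_1|^{1/2}$ via Cauchy--Schwarz, and the difficulty is to convert this into a \emph{pointwise} temporal modulus. To do so I would average over a subinterval $J\subset I_j$ of length $\ell$ with $s\in\bar J$: inserting the local mean $\tfrac1\ell\int_{J}\theta^j_n(\sigma,\cdot)\,d\sigma$, estimating the two endpoint terms by the spatial Hölder bound and the averaged difference by Cauchy--Schwarz, gives
\begin{equation*}
|\theta^j_n(s,t_2)-\theta^j_n(s,t_1)|\le C\,\ell^{(p-1)/p}+C\,\ell^{-1/2}|t_2-t_1|^{1/2}.
\end{equation*}
The simple choice $\ell=|t_2-t_1|^{1/2}$ produces the temporal exponent $\min\{(p-1)/(2p),\tfrac14\}=\alpha$, while the spatial exponent $(p-1)/p\ge\alpha$ shows the same $\alpha$ governs the spatial direction on the bounded domain. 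Hence $\{\theta^j_n\}$ is uniformly bounded in $C^{0,\alpha}([0,T]\times\bar I_j)$.

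To conclude I would apply Arzelà--Ascoli (the uniform $C^0$ bound comes from the Morrey estimate, equicontinuity from the $C^{0,\alpha}$ bound) to obtain a further subsequence converging in $C^0$; its limit necessarily coincides with the $\theta^j$ of \eqref{eq:4.2} and inherits the uniform $C^{0,\alpha}$ bound. Optimizing the balance above with $\ell=|t_2-t_1|^{1/(2\beta+1)}$, $\beta=(p-1)/p$, gives a uniform bound at the strictly larger exponent $\tfrac{\beta}{2\beta+1}>\alpha$ (for $p\neq 2$), so interpolating this against the $C^0$ convergence upgrades the convergence to $C^{0,\alpha}$, proving \eqref{eq:4.3}. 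Finally, since $\bt_{0,n}=\bt_0$ forces $\theta^j_n(\cdot,0)=\theta^j_0$ for every $n$, the uniform convergence gives $\theta^j(\cdot,0)=\theta^j_0$, and the joint continuity of $\theta^j$ then yields $\theta^j(\cdot,t)\to\theta^j_0$ in $C^0$ as $t\downarrow0$.
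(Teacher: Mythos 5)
Your proposal is correct, and its skeleton coincides with the paper's: transfer the bounds of Theorem~\ref{thm:3.2} to the interpolants (the paper does this implicitly; your convexity remark makes it explicit), extract the weak-$\ast$/weak limits, prove a uniform space--time H\"older bound, and conclude via Arzel\`a--Ascoli together with the identification of $\partial_t\theta^j$ through test functions. The one genuinely different step is the passage from the $L^2$-in-space temporal modulus $\|\theta^j_n(\cdot,t_2)-\theta^j_n(\cdot,t_1)\|_{L^2(I_j)}\le C|t_2-t_1|^{1/2}$ to a pointwise one. The paper sets $\Gm=\theta^j_n(\cdot,t_2)-\theta^j_n(\cdot,t_1)$ and invokes the interpolation inequality $\|\Gm\|_{L^\infty}\le C\|\Gm\|_{L^q}^{1/2}\|\Gm\|_{W^{1,p}}^{1/2}$ with $q=p/(p-1)$ (citing Adams), treating $p\ge 2$ directly and $p\in(1,2)$ via a further $L^q$-interpolation between $L^2$ and $L^\infty$ absorbed by the uniform $L^\infty$ bound; you instead compare $\theta^j_n(s,\cdot)$ with its spatial average over a subinterval of length $\ell$, estimating the oscillation by the Morrey bound and the averaged difference by Cauchy--Schwarz, then choose $\ell=|t_2-t_1|^{1/2}$. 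Both routes give exactly $\alpha=\min\{1/4,(p-1)/(2p)\}$, but yours is more elementary (no external interpolation theorem) and, once $\ell$ is optimized, yields the strictly better exponent $\beta/(2\beta+1)$ with $\beta=(p-1)/p$ whenever $p\neq2$. This buys you something the paper glosses over: a uniform $C^{0,\alpha}$ bound plus Arzel\`a--Ascoli literally gives convergence only in $C^{0,\alpha'}$ for $\alpha'<\alpha$, so upgrading to convergence in $C^{0,\alpha}$ itself requires a reserve of H\"older regularity, which your optimized bound supplies for $p\neq 2$; at $p=2$ your argument and the paper's share the same small slack, and there \eqref{eq:4.3} should be read as convergence in $C^{0,\alpha'}$ for every $\alpha'<1/4$, which is all that is ever used later. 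Two harmless technicalities you should record: the averaging argument needs $\ell\le L_j$ (for $|t_2-t_1|^{1/2}>L_j$ use the uniform $L^\infty$ bound instead), and one must observe that the weak-$\ast$ limit in $L^\infty(0,T;W^{1,p}(I_j))$ and the weak limit in $H^1(0,T;L^2(I_j))$ coincide (both imply distributional convergence), which is what the paper's explicit test-function computation accomplishes.
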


A direct consequence of equation \eqref{eq:4.3} of Lemma \ref{lem:4.1} and  Lemma~\ref{corPaola2} is the following 
\begin{cor}\label{corpaola}
Under the assumptions of Lemma \ref{lem:4.1},
for all $j=1,2,3$ and $n\in \mathbb N$ there holds
\[
\text{osc}_{\bar{I_j}} \, \theta^{j}_{n},\ \text{osc}_{\bar{I_j}} \, \theta^{j}  \in C^\alpha([0,T]).
\]
\end{cor}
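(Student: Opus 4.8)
The plan is to reduce the statement to the elementary fact that the oscillation functional is Lipschitz continuous with respect to the supremum norm, and then to insert the joint Hölder regularity \eqref{eq:4.3}; this combination is exactly what Lemma~\ref{corPaola2} packages.

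First I would record the key estimate for the oscillation functional: for any two continuous functions $f,h\colon\bar{I}_j\to\R$ one has
$$\big|\,\text{osc}_{\bar{I}_j} f-\text{osc}_{\bar{I}_j} h\,\big|\le 2\,\|f-h\|_{L^{\infty}(I_j)},$$
because $\text{osc}_{\bar{I}_j} f=\max_{\bar{I}_j} f-\min_{\bar{I}_j} f$ and both $f\mapsto\max_{\bar{I}_j}f$ and $f\mapsto\min_{\bar{I}_j}f$ are $1$-Lipschitz for the sup norm. In other words $\text{osc}_{\bar{I}_j}$ is a $2$-Lipschitz map from $(C^{0}(\bar{I}_j),\|\cdot\|_{\infty})$ to $\R$, which is the content of Lemma~\ref{corPaola2}.

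Next I would apply this slice by slice in time. For each fixed $t\in[0,T]$ the map $s\mapsto\theta^j(s,t)$ is continuous on $\bar{I}_j$ (by $W^{1,p}(I_j)\hookrightarrow C^{0}(\bar{I}_j)$ and Lemma~\ref{lem:4.1}), so $g(t):=\text{osc}_{\bar{I}_j}\theta^{j}(\cdot,t)$ is well defined, and the Lipschitz estimate with $f=\theta^{j}(\cdot,t)$, $h=\theta^{j}(\cdot,t')$ gives
$$|g(t)-g(t')|\le 2\sup_{s\in\bar{I}_j}\big|\theta^{j}(s,t)-\theta^{j}(s,t')\big|\le 2\,[\theta^{j}]_{C^{0,\alpha}([0,T]\times I_j)}\,|t-t'|^{\alpha},$$
where the last inequality uses \eqref{eq:4.3} evaluated at two points differing only in the time variable. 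Hence $\text{osc}_{\bar{I}_j}\theta^{j}\in C^{\alpha}([0,T])$. The identical computation with $\theta^{j}_n$ in place of $\theta^{j}$ yields the claim for the approximants, once one notes that each $\theta^{j}_n$ lies in $C^{0,\alpha}([0,T]\times I_j)$ --- this is implicit in the very convergence \eqref{eq:4.3}, and can also be read off directly from the piecewise-linear-in-$t$ definition of $\theta^j_n$ together with the uniform $W^{1,p}$-bound \eqref{eq:4.1bis} on the time slices. There is no genuine analytic obstacle here: the only point deserving care is the joint-in-$(s,t)$ control that lets one freeze $s$ and extract the pure $|t-t'|^{\alpha}$ modulus, and this is already guaranteed by \eqref{eq:4.3}.
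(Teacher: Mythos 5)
Your proposal is correct and follows essentially the same route as the paper: the paper's Lemma~\ref{corPaola2} is exactly the observation that the oscillation functional is ($2$-)Lipschitz with respect to the sup norm in space, and combining it with the joint space--time H\"older bound \eqref{eq:4.3} is precisely what you do slice by slice in time. The only shaky point is your parenthetical alternative justification that $\theta^j_n\in C^{0,\alpha}([0,T]\times I_j)$ follows from piecewise linearity in $t$ plus \eqref{eq:4.1bis} alone --- the time regularity also requires the velocity estimate \eqref{eq:4.1} via the interpolation carried out in the proof of Lemma~\ref{lem:4.1} --- but this does not affect your argument, since your primary justification (membership in $C^{0,\alpha}$ is implicit in the convergence \eqref{eq:4.3}) suffices.
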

In particular, using  Definition~\ref{definition:2.4},  we can assert that the oscillations of the maps $\theta^{j}_{i,n}$ are close to the oscillation of $\theta^{j}_{0,n}$ if $T$ is chosen sufficiently small. This fact will be used in the proof of Theorem~\ref{thm:existence2}.

\begin{lem} \label{lem:4.2}
Let $\bt_{0} \in H$ and $T>0$ be as in Theorem~\ref{thm:3.2}.
Let $\bar{\bt}_{n}=(\bar{\theta}^{1},\bar{\theta}^{2},\bar{\theta}^{3} )$, $\underline{\bt}_{n}=(\underline{\theta}^{1},\underline{\theta}^{2},\underline{\theta}^{3} )$ be the piecewise constant interpolations of $\{ \bt_{i,n} \}$ as given in Definition~\ref{definition:2.5}. 
Then we have 
\begin{align} \label{eq:4.13}
\bar{\theta}_{n}^{j} \to \theta^{j}  \quad \text{ and } \quad  \underline{\theta}^{j}_{n} \to \theta^{j} \text{ in } C^{0}([0,T] \times I_{j}), \quad j=1,2,3, 
\end{align} 
where $\theta^{j}$, $j=1,2,3$, denote the maps obtained in Lemma \ref{lem:4.1}.   
Moreover, it holds that 
\begin{align} \label{eq:4.14}
 \partial_{s}\bar{\theta}^{j}_{n} \rightharpoonup  \partial_{s}\theta^{j} \quad  \text{ and } \quad  \partial_{s}\underline{\theta}_{n}^{j} \rightharpoonup  \partial_{s}\theta^{j}  \quad  \text{weakly in} \quad L^p(0,T; L^{p}(I_{j})) \quad \text{as} \quad n \to \infty. 
\end{align}
\end{lem}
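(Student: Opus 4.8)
The plan is to derive everything from the explicit relationship between the piecewise constant interpolations $\bar{\bt}_{n}$, $\underline{\bt}_{n}$ of Definition~\ref{definition:2.5} and the piecewise linear interpolation $\bt_{n}$ of Definition~\ref{definition:2.4}, together with the convergence and uniform bounds already secured in Lemma~\ref{lem:4.1} and Theorem~\ref{thm:3.2}. The crucial observation is that on each time slab $t\in[(i-1)\tau_{n}, i\tau_{n}]$ the linear interpolant agrees with the nodal values, so that $\bar{\theta}^{j}_{n}(s,t)=\theta^{j}_{i,n}(s)=\theta^{j}_{n}(s,i\tau_{n})$ and $\underline{\theta}^{j}_{n}(s,t)=\theta^{j}_{i-1,n}(s)=\theta^{j}_{n}(s,(i-1)\tau_{n})$.

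For the uniform convergence \eqref{eq:4.13}, I would compare each piecewise constant interpolation with $\theta^{j}_{n}$ rather than with the discrete velocity directly. Using the node identities above,
\[
|\bar{\theta}^{j}_{n}(s,t)-\theta^{j}_{n}(s,t)|=|\theta^{j}_{n}(s,i\tau_{n})-\theta^{j}_{n}(s,t)|,\qquad |\underline{\theta}^{j}_{n}(s,t)-\theta^{j}_{n}(s,t)|=|\theta^{j}_{n}(s,(i-1)\tau_{n})-\theta^{j}_{n}(s,t)|.
\]
Since by \eqref{eq:4.3} the sequence $\theta^{j}_{n}$ converges, hence is bounded, in $C^{0,\alpha}([0,T]\times I_{j})$, and the relevant time gaps are at most $\tau_{n}$, both right-hand sides are bounded uniformly in $(s,t)$ by $C\tau_{n}^{\alpha}\to 0$. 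Combining this with $\theta^{j}_{n}\to\theta^{j}$ in $C^{0}$ yields $\bar{\theta}^{j}_{n},\underline{\theta}^{j}_{n}\to\theta^{j}$ in $C^{0}([0,T]\times I_{j})$.

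For the weak convergence of the spatial derivatives \eqref{eq:4.14}, I would first note that by Theorem~\ref{thm:3.2} each $\bt_{i,n}$ satisfies $D(\bt_{i,n})\le D(\bt_{0})$ together with a uniform $L^{2}$-bound, so the piecewise constant interpolations are uniformly bounded in $L^{\infty}(0,T;W^{1,p}(I_{j}))$; in particular $\partial_{s}\bar{\theta}^{j}_{n}$ and $\partial_{s}\underline{\theta}^{j}_{n}$ are bounded in $L^{p}((0,T)\times I_{j})$. Along the subsequence fixed in Lemma~\ref{lem:4.1} they therefore converge weakly in $L^{p}$ to some limits, which I would identify by testing against $\phi\in C_{c}^{\infty}((0,T)\times I_{j})$ and integrating by parts in $s$: the boundary terms vanish, and the strong $C^{0}$-convergence from \eqref{eq:4.13} allows passage to the limit in $\int_{0}^{T}\!\int_{I_{j}}\bar{\theta}^{j}_{n}\,\partial_{s}\phi\,ds\,dt\to\int_{0}^{T}\!\int_{I_{j}}\theta^{j}\,\partial_{s}\phi\,ds\,dt$, showing the weak limit is the distributional derivative $\partial_{s}\theta^{j}$. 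Uniqueness of the weak limit, combined with the uniform $L^{p}$-bound and density of test functions in $L^{p/(p-1)}$, then upgrades this to weak convergence of the whole (sub)sequence.

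The main obstacle is precisely the uniform convergence in \eqref{eq:4.13}: the naive comparison $\bar{\theta}^{j}_{n}-\underline{\theta}^{j}_{n}=\tau_{n}V^{j}_{i,n}$ only controls the difference in an $L^{2}$-sense in space-time, since by Theorem~\ref{thm:3.2} the discrete velocities $V^{j}_{i,n}$ are bounded merely in $L^{2}$ and not in $L^{\infty}$. The resolution is to route the comparison through the linear interpolant and invoke the uniform Hölder bound encoded in \eqref{eq:4.3}, which converts the time-step $\tau_{n}$ into the quantitatively small factor $\tau_{n}^{\alpha}$ uniformly in $(s,t)$; once this strong convergence is in hand, the derivative statement \eqref{eq:4.14} is a routine weak-compactness-plus-identification argument.
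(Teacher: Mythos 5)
Your proposal is correct and follows essentially the same route as the paper: both prove \eqref{eq:4.13} by identifying $\bar{\theta}^{j}_{n}(\cdot,t)$, $\underline{\theta}^{j}_{n}(\cdot,t)$ with nodal values of the linear interpolant $\theta^{j}_{n}$ and invoking the uniform H\"older-in-time estimate (the paper cites its estimate \eqref{eq:4.12}, you equivalently use boundedness in $C^{0,\alpha}$ implied by \eqref{eq:4.3}) to get an $O(\tau_{n}^{\alpha})$ error, and both prove \eqref{eq:4.14} via the uniform $L^{p}$ bound from Theorem~\ref{thm:3.2}, weak compactness, and identification of the limit as $\partial_{s}\theta^{j}$ by integrating by parts against compactly supported test functions and using the already-established $C^{0}$ convergence.
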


\begin{lem} \label{lem:4.4}
Let $\bar{\bt}_{n}=(\bar{\theta}_{n}^{1}, \bar{\theta}_{n}^{2}, \bar{\theta}_{n}^{3})$ be the piecewise constant interpolation of $\{\bt_{i,n}\}$ given in Definition~\ref{definition:2.5}. 
and let the assumptions of Lemma~\ref{lem:4.1} hold. 
Then,  for $j=1,2,3$,
it holds that 
\begin{align*}
\int^{T}_{0} \int_{I_{j}} | (\bar{\theta}^{j}_{n})_{s}|^{p-2}  (\bar{\theta}^{j}_{n})_{s}  \cdot  \vp_{s} \, ds dt 
 \to \int^{T}_{0} \int_{I_{j}} | \theta^{j}_{s} |^{p-2}  \theta^{j}_{s}  \cdot  \vp_{s} \, ds dt 
 \quad \text{as} \quad n \to \infty 
\end{align*}
for any $\vp \in L^{\infty}(0,T; W^{1,p}(I_{j}))$. 
\end{lem}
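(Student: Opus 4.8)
The plan is to identify the weak limit of the nonlinear quantity $w_n^j := |(\bar\theta_n^j)_s|^{p-2}(\bar\theta_n^j)_s$ and then pass to the limit against the fixed test function. By Theorem~\ref{thm:3.2} (equivalently \eqref{eq:4.1bis}) the gradients $(\bar\theta_n^j)_s$ are bounded in $L^p((0,T)\times I_j)$, so, since $(p-1)\tfrac{p}{p-1}=p$, the maps $w_n^j$ are bounded in $L^{p/(p-1)}((0,T)\times I_j)$; passing to a subsequence we may assume $w_n^j\rightharpoonup w^j$ weakly in $L^{p/(p-1)}$. Likewise, by the bound on the discrete multipliers in Theorem~\ref{thm:3.2}, we may assume $\vec{\boldsymbol\lambda}_n\rightharpoonup\vec\lambda$, $\vec{\boldsymbol\mu}_n\rightharpoonup\vec\mu$ weakly in $L^2(0,T)$. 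Once we prove that $w^j=|\theta_s^j|^{p-2}\theta_s^j$, the Lemma follows immediately: for fixed $\varphi\in L^\infty(0,T;W^{1,p}(I_j))$ one has $\varphi_s\in L^p((0,T)\times I_j)$ (the dual exponent), so weak convergence gives $\int_0^T\!\int_{I_j} w_n^j\varphi_s\to\int_0^T\!\int_{I_j} w^j\varphi_s=\int_0^T\!\int_{I_j}|\theta_s^j|^{p-2}\theta_s^j\varphi_s$, and since the limit is independent of the subsequence the whole sequence converges.

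The crucial point is to pass to the limit in the product $\int_0^T\!\int_{I_j} w_n^j(\bar\theta_n^j)_s$, a pairing of two merely weakly convergent sequences. Here I would use the discrete Euler--Lagrange equation. Testing \eqref{FV} componentwise (with $\boldsymbol\psi=(\psi^1,0,0)$ and the analogous choices) and integrating the step-$i$ identity over $t\in((i-1)\tau_n,i\tau_n]$, where all piecewise constant interpolations are frozen, yields for every $\phi\in L^\infty(0,T;W^{1,p}(I_j))$ the time-integrated relation
\begin{equation*}
\int_0^T\!\!\int_{I_j} w_n^j\,\phi_s\,ds\,dt+\int_0^T\!\!\int_{I_j} V_n^j\,\phi\,ds\,dt+\int_0^T B_n^j[\phi]\,dt=0,
\end{equation*}
where $B_n^j[\phi]$ is linear in $\phi$, with coefficients the weakly $L^2$-convergent components of $\vec{\boldsymbol\lambda}_n,\vec{\boldsymbol\mu}_n$ and with integrands $\int_{I_j}\sin\bar\theta_n^j\,\phi\,ds$ and $\int_{I_j}\cos\bar\theta_n^j\,\phi\,ds$. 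Recalling that $V_n^j=\partial_t\theta_n^j\rightharpoonup\partial_t\theta^j$ weakly in $L^2$ by \eqref{eq:4.2}, that $\bar\theta_n^j\to\theta^j$ uniformly by \eqref{eq:4.13}, I evaluate this in two ways. With $\phi=\bar\theta_n^j$, the second term converges (weak times strong) to $\int_0^T\!\int_{I_j}\partial_t\theta^j\,\theta^j$, and $\int_0^T B_n^j[\bar\theta_n^j]\,dt\to\int_0^T B^j[\theta^j]\,dt$, the coefficients converging weakly in $L^2(0,T)$ against the uniformly convergent integrals; here $B^j$ carries the limit multipliers $\vec\lambda,\vec\mu$ and $\theta^j$. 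With the fixed choice $\phi=\theta^j$, the first term now converges to $\int_0^T\!\int_{I_j} w^j\theta_s^j$ by weak convergence of $w_n^j$ against the fixed $\theta_s^j$, while the other two terms converge to the very same limits. Comparing the two resulting identities gives
\begin{equation*}
\lim_{n\to\infty}\int_0^T\!\!\int_{I_j} w_n^j\,(\bar\theta_n^j)_s\,ds\,dt=\int_0^T\!\!\int_{I_j} w^j\,\theta_s^j\,ds\,dt.
\end{equation*}

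With this at hand the identification is a Minty-type monotonicity argument, which conveniently avoids distinguishing the cases $p\ge 2$ and $1<p<2$. For arbitrary $v\in L^\infty(0,T;W^{1,p}(I_j))$ the monotonicity of the scalar map $\xi\mapsto|\xi|^{p-2}\xi$ gives $\int_0^T\!\int_{I_j}(w_n^j-|v_s|^{p-2}v_s)((\bar\theta_n^j)_s-v_s)\,ds\,dt\ge0$; letting $n\to\infty$ and using the displayed limit together with $(\bar\theta_n^j)_s\rightharpoonup\theta_s^j$ (from \eqref{eq:4.14}) and $w_n^j\rightharpoonup w^j$, I obtain $\int_0^T\!\int_{I_j}(w^j-|v_s|^{p-2}v_s)(\theta_s^j-v_s)\,ds\,dt\ge0$ for all such $v$. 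Inserting $v_s=\theta_s^j-\varepsilon\xi_s$ with $\xi\in L^\infty(0,T;W^{1,p}(I_j))$ arbitrary, dividing by $\pm\varepsilon$ and letting $\varepsilon\to0^\pm$ (the Nemytskii operator $\xi\mapsto|\xi|^{p-2}\xi$ being continuous on $L^p$) forces $\int_0^T\!\int_{I_j}(w^j-|\theta_s^j|^{p-2}\theta_s^j)\xi_s\,ds\,dt=0$; since $\xi_s$ ranges over a dense subset of $L^p((0,T)\times I_j)$, we conclude $w^j=|\theta_s^j|^{p-2}\theta_s^j$, which finishes the proof. The main obstacle is precisely the passage to the limit in $\int w_n^j(\bar\theta_n^j)_s$: weak convergence is useless across the nonlinearity, and the whole argument hinges on recovering this limit from the discrete equation by testing it with the discrete solution $\bar\theta_n^j$ itself and matching the outcome with the limiting equation tested with $\theta^j$.
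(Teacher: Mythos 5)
Your proof is correct, but it resolves the key difficulty by a genuinely different device than the paper. Both arguments reduce to controlling the pairing $\int_0^T\!\int_{I_j} w_n^j\,(\bar\theta_n^j)_s\,ds\,dt$ of two merely weakly convergent sequences, where $w_n^j:=|(\bar\theta_n^j)_s|^{p-2}(\bar\theta_n^j)_s$. The paper first gains compactness on the flux itself: from the pointwise form \eqref{regularity} of the discrete Euler--Lagrange equation together with \eqref{boundLLdis} and Theorem~\ref{thm:3.2} it bounds $w_n^j$ in $L^2(0,T;H^1(I_j))$, so that $w_n^j\rightharpoonup w^j$ weakly in $L^2(0,T;H^1(I_j))$ with $w^j\in L^2(0,T;H^1_0(I_j))$ by \eqref{nbc99}; the dangerous product is then integrated by parts, $\int\!\int w_n^j(\bar\theta_n^j)_s=-\int\!\int (w_n^j)_s\,\bar\theta_n^j$, turning it into a weak-$L^2$ times uniform (via \eqref{eq:4.13}) pairing, and the identification of $w^j$ is completed through the convexity inequality \eqref{eq:4.19}--\eqref{eq:4.20} followed by the perturbation $\psi=\theta^j\pm\varepsilon\varphi$, i.e.\ a subdifferential argument rather than Minty's lemma proper. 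You instead keep only the $L^{p/(p-1)}$ bound on $w_n^j$ and recover the limit of the product from the equation itself, testing the time-integrated version of \eqref{FV} once with $\bar\theta_n^j$ and once with $\theta^j$ and comparing the two limits (weak convergence of $V_n^j$ and of the multipliers against uniformly convergent factors) --- the standard parabolic Minty--Browder device --- and then conclude by monotonicity plus a density argument; all steps check out, including the final density claim, since every $h\in C^\infty_0((0,T)\times I_j)$ is of the form $\xi_s$ with $\xi(s,t)=\int_0^s h(\sigma,t)\,d\sigma\in L^\infty(0,T;W^{1,p}(I_j))$. Note that neither route is lighter on hypotheses: you invoke the $L^2(0,T)$ bound on the discrete multipliers from Theorem~\ref{thm:3.2}, while the paper uses the same information through \eqref{regularity}/\eqref{boundLLdis}. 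What the paper's route buys is the spatial $H^1$ regularity of the flux, which is needed anyway for part (ii) of Theorem~\ref{thm:existence} (the strong equations \eqref{E1}--\eqref{E4} and the boundary conditions \eqref{E4}); what your route buys is softness: no pointwise regularity step for the discrete minimizers is required, only the weak formulation and the uniform energy bounds, and the multiplier convergence you establish is in any case reused in the proof of Theorem~\ref{thm:existence}(i).
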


We can now prove our main existence result.

\begin{proof}[Proof of Theorem~\ref{thm:existence}] 
(i) Equation \eqref{FV}  yields that for any $\boldsymbol{\varphi}=(\vp^{1},\vp^{2}, \vp^{3})$ with $\varphi^{j} \in L^{\infty} (0,T; W^{1,p}(I_{j}))$, $j=1,2,3,$  and (for almost every) $t \in  ((i-1)\tau_{n},  i \tau_{n}]$, $i=1, \ldots, n$ we have
\begin{align*}
0&=\sum_{j=1}^{3}\int_{I_{j}}V^{j}_{n}(s,t) \varphi^{j}(s,t) ds 
+ \sum_{j=1}^{3}\int_{I_{j}} | (\bar{\theta}^{j}_{n})_{s}|^{p-2}  (\bar{\theta}^{j}_{n})_{s}  \, ( \varphi^{j})_{s} \,ds \\
& \qquad-({\lambda}^{1}_{n}(t) - \mu^{1}_{n}(t))\int_{I_{1}} \sin (\bar{\theta}_{n}^{1}) \, \varphi^{1} ds 
+ (\lambda^{2}_{n}(t) -\mu_{n}^{2}(t))\int_{I_{1}} \cos (\bar{\theta}_{n}^{1} )\,  \varphi^{1} ds \\
& \qquad +\lambda^{1}_{n}(t) \int_{I_{2}} \sin (\bar{\theta}_{n}^{2}) \, \vp^{2} ds  -\lambda^{2}_{n}(t) \int_{I_{2}} \cos (\bar{\theta}_{n}^{2}) \, \vp^{2} ds \\
& \qquad -\mu^{1}_{n}(t) \int_{I_{3}} \sin (\bar{\theta}_{n}^{3}) \, \vp^{3} ds  +\mu^{2}_{n}(t) \int_{I_{3}} \cos (\bar{\theta}_{n}^{3}) \, \vp^{3} ds 
\end{align*}
so that integration in time yields
\begin{align*}
0&=\sum_{j=1}^{3} \int_{0}^{T}\int_{I_{j}}V^{j}_{n}(s,t) \varphi^{j}(s,t) ds   dt
+ \sum_{j=1}^{3} \int_{0}^{T}\int_{I_{j}} | (\bar{\theta}^{j}_{n})_{s}|^{p-2}  (\bar{\theta}^{j}_{n})_{s}  \, ( \varphi^{j})_{s} \,ds dt \\
& \qquad-\int_{0}^{T}({\lambda}^{1}_{n}(t) - \mu^{1}_{n}(t))\int_{I_{1}} \sin (\bar{\theta}_{n}^{1}) \, \varphi^{1} ds  dt
+ \int_{0}^{T}(\lambda^{2}_{n}(t) -\mu_{n}^{2}(t))\int_{I_{1}} \cos (\bar{\theta}_{n}^{1} )\,  \varphi^{1} ds dt \\
& \qquad +\int_{0}^{T}\lambda^{1}_{n}(t) \int_{I_{2}} \sin (\bar{\theta}_{n}^{2}) \, \vp^{2} ds  dt -\int_{0}^{T}\lambda^{2}_{n}(t) \int_{I_{2}} \cos (\bar{\theta}_{n}^{2}) \, \vp^{2} ds dt \\
& \qquad -\int_{0}^{T}\mu^{1}_{n}(t) \int_{I_{3}} \sin (\bar{\theta}_{n}^{3}) \, \vp^{3} ds dt  +\int_{0}^{T}\mu^{2}_{n}(t) \int_{I_{3}} \cos (\bar{\theta}_{n}^{3}) \, \vp^{3} ds dt 
\end{align*}
for any $\boldsymbol{\varphi} \in L^{\infty} (0,T; \boldsymbol{W}^{1,2})$.
We now let $n \to \infty$. The first two integrals are dealt with in Lemma~\ref{lem:4.1} and  Lemma~\ref{lem:4.4}.
By the uniform bound given Theorem~\ref{thm:3.2} we have that there exist $\lambda^{1}, \lambda^{2}, 
\mu^{1}, \mu^{2} \in L^{2}(0,T)$ such that 
\begin{align}\label{convLM}
\lambda^{j}_{n} \rightharpoonup \lambda^{j} \text{ weakly in } L^{2}(0,T), \qquad \mu^{j}_{n} \rightharpoonup \mu^{j} \text{ weakly in } L^{2}(0,T)
\end{align}
for $j=1,2$.
Since $v_{n}(t):=\int_{I_{1}} \sin (\bar{\theta}^{1}_{n}) \varphi^{1}(s,t) ds \to \int_{I_{1}} \sin (\theta^{1}) \varphi^{1}(s,t) ds =:v(t) $ by  Lemma~\ref{lem:4.2}, and $|v_{n}| \leq C(\vp^{1})$, then also $v_{n} \to v$ in $L^{2}(0,T)$ and 
we infer that
$$\int_{0}^{T}\lambda^{1}_{n}(t) \int_{I_{1}} \sin (\bar{\theta}_{n}^{1})\, \varphi^{1} ds dt   \to \int_{0}^{T} \lambda^{1}(t)\int_{I_{1}} \sin (\theta^{1}) \,\varphi^{1} ds dt $$
for $n \to \infty$. The other integrals with the Lagrange multipliers are  treated in a similar way and the first statement follows. 

(ii) Equations \eqref{E1}, \eqref{E2}, \eqref{E3}, and the natural boundary conditions \eqref{E4} follow directly from \eqref{eqtest} by choosing test functions of the form $\varphi^{j}(s,t)=\tilde{\varphi}(t)\psi^{j}(s)$ with $\psi^{j } \in W^{1,p}(I_{j})$ and $ \tilde{\varphi}\in C^{\infty}_{0}(0,T)$. Also we exploit the fact that given any map $f \in L^{1}(I)$ with $f_{s} \in L^{2}(I)$ and  $I \subset \R$ bounded interval, it follows from embedding theory that $f \in H^{1}(I)$.

(iii) By construction we have that ${\bt}_{i,n} \in H$, so that  
\begin{align*}
\int_{I_{1}} (\cos \bar{\theta}^{1}_{n}, \sin \bar{\theta}^{1}_{n})  ds=\int_{I_{2}} (\cos \bar{\theta}^{2}_{n}, \sin \bar{\theta}^{2}_{n}) ds = \int_{I_{3}} (\cos \bar{\theta}^{3}_{n}, \sin \bar{\theta}^{3}_{n})  ds
\end{align*}
for  all $t \in  ((i-1)\tau_{n},  i \tau_{n}]$, $i=1, \ldots, n$.
Passing to the limit as $n\to\infty$ and using \eqref{eq:4.13} we obtain  \eqref{constraintlimit}.
\end{proof}

We now show that the Lagrange multipliers in Theorem \ref{thm:existence} are uniformly bounded in time.

\begin{prop}
Let $\bt_{0} \in H$, $T>0$, $\bt=(\theta^{1}, \theta^{2}, \theta^{3})$, $\vec{\lambda}$ and $\vec{\mu}$ be as in Theorem~\ref{thm:existence}. Then we have that the system \eqref{e1bis}, \eqref{e2bis} holds for almost every time and
\begin{align}\label{eq:3.20}
\| \vec{\lambda}\|_{L^{\infty}(0,T)} + \| \vec{\mu}\|_{L^{\infty}(0,T)} \leq C (D(\bt_{0}), p).
\end{align}
The constant C has the same dependencies as in Lemma~\ref{lem:boundLL}.
\end{prop}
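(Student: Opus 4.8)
\emph{Step 1: deriving the algebraic system.} First I would establish \eqref{e1bis}, \eqref{e2bis} directly from properties (ii) and (iii), avoiding any nonlinear passage to the limit in the discrete identities \eqref{eqLdis1}, \eqref{eqLdis2} (which would require strong convergence of the gradients in the terms $G^i(\bar\theta_n)$). Since $\theta^j\in H^1(0,T;L^2(I_j))$ and $\sin,\cos$ are $C^1$ with bounded derivatives, the maps $t\mapsto\int_{I_j}(\cos\theta^j,\sin\theta^j)\,ds$ are absolutely continuous with derivative $\int_{I_j}\theta^j_t(-\sin\theta^j,\cos\theta^j)\,ds$. By (iii) the quantity in \eqref{constraintlimit} is constant in $t$, so differentiating $\int_{I_1}T^1-\int_{I_2}T^2\equiv0$ and $\int_{I_1}T^1-\int_{I_3}T^3\equiv0$ gives, for a.e.\ $t$,
\[
\int_{I_1}\theta^1_t(-\sin\theta^1,\cos\theta^1)\,ds=\int_{I_2}\theta^2_t(-\sin\theta^2,\cos\theta^2)\,ds=\int_{I_3}\theta^3_t(-\sin\theta^3,\cos\theta^3)\,ds.
\]
Into each integral I would insert the expression for $\theta^j_t$ coming from \eqref{E1}--\eqref{E3} and integrate the leading term by parts. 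This is legitimate for a.e.\ $t$ because (ii) gives $|\partial_s\theta^j|^{p-2}\partial_s\theta^j\in H^1(I_j)\hookrightarrow C^0(\bar I_j)$, the boundary conditions \eqref{E4} make its boundary values vanish, and the products $|\partial_s\theta^j|^{p-2}\partial_s\theta^j\,\partial_s\theta^j=|\partial_s\theta^j|^p$ lie in $L^1(I_j)$. The boundary terms drop, the leading term produces exactly the vectors $G^j$ of \eqref{defG}, and the zeroth-order terms assemble into $\vec\lambda\cdot A^i$, $\vec\mu\cdot A^i$ with $A^i$ as in \eqref{matA}. The result is \eqref{e1}, \eqref{e2}, which rearrange into \eqref{e1bis}, \eqref{e2bis}.

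\emph{Step 2: pointwise bound on the multipliers.} For a.e.\ $t$ the pair $(\vec\lambda(t),\vec\mu(t))$ solves the $4\times4$ linear system \eqref{e1bis}, \eqref{e2bis} with coefficients $A^i(\theta^i(\cdot,t))$ and data $G^i(\theta^i(\cdot,t))$. Exactly as in the discussion preceding Lemma~\ref{lem:boundLL}, and using Corollary~\ref{corca} to guarantee solvability, this yields
\[
|\vec\lambda(t)|+|\vec\mu(t)|\le C(t)\sum_{i=1}^3\int_{I_i}|\partial_s\theta^i(\cdot,t)|^p\,ds,
\]
where $C(t)$ depends on $L_1,L_2,L_3$ and on $1/\det A^1(t)$, $1/\det A^2(t)$. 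The sum on the right is controlled uniformly in $t$ by \eqref{eq:4.1bis}, so the whole task reduces to a uniform lower bound on $\det A^1(t)$ and $\det A^2(t)$.

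\emph{Step 3: uniform non-degeneracy (the main obstacle).} This is the crux, since the constant in Lemma~\ref{lem:boundLL} blows up if an oscillation degenerates. I would produce a uniform modulus of continuity together with a uniform oscillation bound, and then invoke Lemma~\ref{lemmaLin}. The modulus is immediate: \eqref{eq:4.1bis} bounds $\sup_{(0,T)}\|\partial_s\theta^j\|_{L^p(I_j)}$, and Morrey's embedding $W^{1,p}\hookrightarrow C^{0,1-1/p}(\bar I_j)$ gives a common Hölder modulus $\omega$ for all $\theta^j(\cdot,t)$ depending only on $D(\bt_0)$ and $p$. For the oscillation I would exploit \eqref{cond1} through the conserved constraint: writing
\[
L_1^2-\Big|\int_{I_1}T^1\Big|^2=\int_{I_1}\!\!\int_{I_1}\big(1-\cos(\theta^1(s)-\theta^1(\sigma))\big)\,ds\,d\sigma,
\]
and noting that \eqref{constraintlimit} forces $|\int_{I_1}T^1|=|\int_{I_3}T^3|\le L_3<L_1$, one gets $L_1^2-|\int_{I_1}T^1|^2\ge L_1^2-L_3^2>0$ for every $t$; bounding the integrand by $1-\cos(\text{osc}_{\bar I_1}\theta^1)$ when the oscillation is $\le\pi$ (and using $\text{osc}>\pi$ trivially otherwise) gives a uniform lower bound $\text{osc}_{\bar I_1}\theta^1(t)\ge c_1>0$ depending only on $L_1,L_3$, and likewise $\text{osc}_{\bar I_2}\theta^2(t)\ge c_2>0$. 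With a uniform oscillation bound and a uniform modulus, Lemma~\ref{lemmaLin} delivers $\det A^1(t),\det A^2(t)\ge C_0>0$ uniformly in $t$, with $C_0=C_0(L_1,L_2,L_3,D(\bt_0),p)$. Feeding this into Step 2 bounds $C(t)$ by a constant with the dependencies of Lemma~\ref{lem:boundLL}, and \eqref{eq:3.20} follows. The genuine difficulty is exactly this uniform-in-time non-degeneracy: the only thing preventing the oscillations of $\theta^1$ and $\theta^2$ from degenerating is the strict inequality \eqref{cond1}, propagated through the conserved constraint.
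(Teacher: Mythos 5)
Your proof is correct, and its skeleton matches the paper's: derive the algebraic system \eqref{e1bis}, \eqref{e2bis} for a.e.\ $t$ by differentiating the conserved constraint \eqref{constraintlimit} in time, then combine the algebraic bound \eqref{boundLLzero} with the uniform energy bound \eqref{eq:4.1bis}. Two differences are worth recording. First, where you insert the strong equations \eqref{E1}--\eqref{E3} into the differentiated constraint and integrate by parts (justified via the $H^1(I_j)\hookrightarrow C^0(\bar I_j)$ regularity from (ii) and the boundary conditions \eqref{E4}), the paper instead tests the weak formulation \eqref{eqtest} directly with $\boldsymbol{\varphi}=(-\tilde\varphi\sin\theta^1,0,0)$, $(\tilde\varphi\cos\theta^1,0,0)$, and their analogues for $\theta^2,\theta^3$, with $\tilde\varphi\in C^\infty_0(0,T)$; this produces the $G^i$ and $\vec\lambda\cdot A^i$, $\vec\mu\cdot A^i$ terms without any integration by parts, so it only uses property (i) rather than (ii). Both routes are legitimate. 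Second, and more substantially: the paper ends with ``\eqref{eq:3.20} now follows from \eqref{boundLLzero} and \eqref{eq:4.1bis}'' and keeps the dependence on the oscillations of $\theta^1,\theta^2$ inside the constant (as the statement permits, via Lemma~\ref{lem:boundLL} and Corollary~\ref{corca}), without spelling out why these oscillations --- and hence $\det A^1(t),\det A^2(t)$ --- stay bounded away from zero \emph{uniformly in time}, which is what an $L^\infty(0,T)$ bound with a single constant really requires. Your Step 3 supplies exactly this missing quantitative ingredient: the identity $L_1^2-\bigl|\int_{I_1}T^1\bigr|^2=\int_{I_1}\int_{I_1}\bigl(1-\cos(\theta^1(s)-\theta^1(\sigma))\bigr)\,ds\,d\sigma$, the constraint-driven bound $\bigl|\int_{I_1}T^1\bigr|=\bigl|\int_{I_3}T^3\bigr|\le L_3<L_1$ from \eqref{cond1}, and the Morrey modulus from \eqref{eq:4.1bis} together give a time-uniform lower bound on the oscillations, hence on the determinants via Lemma~\ref{lemmaLin}. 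This yields the estimate with explicit dependencies $(L_1,L_2,L_3,D(\bt_0),p)$, a sharper and more self-contained conclusion than the paper's, which leaves the non-degeneracy implicit in the stated dependence of $C$ on the oscillations.
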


\begin{proof}
Testing the weak formulation \eqref{eqtest} with $\boldsymbol{\varphi}(\boldsymbol{s},t)=(-\tilde{\varphi} \sin \theta^{1} , 0,0 )$ and $\boldsymbol{\varphi}(\boldsymbol{s},t)=( \tilde{\varphi}\cos \theta^{1}, 0, 0)$, where $\tilde{\varphi} \in C^{\infty}_{0}(0,T)$, yields that for almost every time there holds
\begin{align*}
\frac{d}{dt} \int_{I_{1}} (\cos \theta^{1}, \sin \theta^{1}) ds  =\int_{I_{1}}  \partial_{t} \theta^{1} (-\sin \theta^{1}, \cos \theta^{1}) ds = G^{1} - (\vec{\lambda}-\vec{\mu}) \cdot A^{1} 
\end{align*}
where we use the notation employed in \eqref{defG}, \eqref{matA}. Similarly testing with $\boldsymbol{\varphi}(\boldsymbol{s},t)=(0,-\tilde{\varphi} \sin \theta^{2} ,0 )$ and   
$\boldsymbol{\varphi}(\boldsymbol{s},t)=(0, \tilde{\varphi}\cos \theta^{2} , 0)$, respectively 
$\boldsymbol{\varphi}(\boldsymbol{s},t)=(0,0,-\tilde{\varphi} \sin \theta^{3}  )$ and   
$\boldsymbol{\varphi}(\boldsymbol{s},t)=(0, 0,\tilde{\varphi}\cos \theta^{3} )$
 where $\tilde{\varphi} \in C^{\infty}_{0}(0,T)$ yields
 that for almost every time
\begin{align*}
\frac{d}{dt} \int_{I_{2}} (\cos \theta^{2}, \sin \theta^{2}) ds=\int_{I_{2}}  \partial_{t} \theta^{2} (-\sin \theta^{2}, \cos \theta^{2}) ds &= G^{2} + \vec{\lambda} \cdot A^{2},\\
\frac{d}{dt} \int_{I_{3}} (\cos \theta^{3}, \sin \theta^{3}) ds=\int_{I_{3}}  \partial_{t} \theta^{3} (-\sin \theta^{3}, \cos \theta^{3}) ds &= G^{3} -\vec{\mu} \cdot A^{3}. 
\end{align*}
Using \eqref{constraintlimit} we infer that for almost every time the system \eqref{e1bis}, \eqref{e2bis} holds for $\bt$. 
Inequality \eqref{eq:3.20} now follows from \eqref{boundLLzero}  and \eqref{eq:4.1bis}. 
\end{proof}

\begin{rem}\rm
Notice that Theorem~\ref{thm:existence} does not yield uniqueness of solutions.
To that end a deeper analysis would be needed (see for instance \cite[Lemma~3.20]{OPW18} for a similar issue in the case of a single evolving curve).
\end{rem}

\begin{rem}\rm
The method of proof of Theorem \ref{thm:existence} slightly differs from the one presented in \cite{OPW18} since in this paper we treat the Lagrange multipliers implicitly. This has the advantage that no restriction on the time  $T$ is necessary to show existence, and that the decrease of the energy follows directly. In particular, there is no need to analyze higher regularity properties of solutions as in \cite{OPW18}.  With the techniques presented here  \cite[Thm~1.1]{OPW18} can be generalized in the following sense: under the hypothesis of \cite[Thm~1.1]{OPW18} then a weak solution to $(P)$ can be defined for any time $T \in (0, +\infty)$.
\end{rem}

So far we assumed \eqref{cond1}. However,
as noticed above, the estimates on the Lagrange multipliers 
given in Theorem \ref{thm:3.2}  hold as long as we assume that two of the three curves have positive total curvature, 
that is, if the corresponding angle functions have positive oscillation. By this observation and by Corollary \ref{corpaola},
we provide a partial extension of Theorem \ref{thm:existence}.

\begin{proof}[Proof of Theorem~\ref{thm:existence2}] 
Recalling Corollary \ref{corpaola}, it follows from \eqref{eq:osc} that there exists $T>0$ such that 
\[
\min\left( \text{osc}_{\bar{I}_{j_1}} \, \bar{\theta}_n^{j_1}(t),
\text{osc}_{\bar{I}_{j_2}} \, \bar{\theta}_n^{j_2}(t)\right) \ge \frac c2,
\]
for all $t\in [0,T]$ and $n\in\mathbb N$. As a consequence the estimates on the Lagrange multipliers 
given in Theorem \ref{thm:3.2} still hold, and we can proceed exactly as in the proof of Theorem \ref{thm:existence}.

To show the final assertion, it is enough to observe that, if $T_{max}<+\infty$ and the oscillation of $\theta^{j_1}$ and 
of $\theta^{j_2}$
are uniformly bounded below by $\delta>0$ on $[0,T_{max}]$, then we can extend the solution on a time interval 
$[0,T']$ with $T'=T'(\delta)>T_{max}$.
\end{proof}

\begin{rem}\label{rem-triod}\rm
Note that the result in Theorem \ref{thm:existence2} can be extended 
to the case of a network of three curves with a single triple junction and three fixed endpoints.
We notice that for such network the second order evolution, expressed in terms of the functions $\theta^j$, is again given by the equations \eqref{E1}--\eqref{E3}.  
Moreover, the natural boundary conditions are still given by \eqref{E4}, whereas the condition \eqref{constraintlimit} becomes
\[
\int_{I_{1}} (\cos \theta^{1}, \sin \theta^{1}) ds - P_1= 
\int_{I_{2}} (\cos \theta^{2}, \sin \theta^{2}) ds- P_2=
\int_{I_{3}} (\cos \theta^{3}, \sin \theta^{3}) ds- P_3\,,
\]
where $P_1,P_2,P_3$ are the fixed endpoints.
\end{rem}


\subsection{Long-time behavior}\label{sec:long}

We now show that the weak solutions given by Theorem \ref{thm:existence} converge,
on a suitable sequence of times, to a critical point of the energy.

\begin{proof}[Proof of Theorem \ref{thm:longtime}]
{}From \eqref{eq:4.1} we know that, for $j=1,2,3$ we have
\begin{align}
\frac{1}{2}\int_{0}^{\infty} \int_{I_{j}} |\partial_{t} \theta^{j}|^2 ds dt  
\leq D(\bt_{0}).
\end{align}
Together with \eqref{eq:3.20} this yields the existence of a sequence of times $(t_{n})_{ n \in \N}$, and vectors $\vec{\lambda}, \vec{\mu} \in \R^{2}$ such that $t_{n} \to \infty$ and
\begin{align}\label{aiutino2}
\vec{\lambda}(t_{n}) \to \vec{\lambda}, \qquad \vec{\mu}(t_{n}) \to \vec{\mu}, \qquad \| \partial_{t} \theta^{j}(t_{n}) \|_{L^{2}(I_{j})} \to 0,
\end{align}
for $j=1,2,3$, as $n \to \infty$.
From \eqref{eq:4.1bis} we infer that for $j=1,2,3$ we have
$$ \| \partial_{s} \theta^{j}(t_{n}) \|_{L^{p}(I_{j})} \leq C(D(\bt_{0}),p).$$
Moreover, from
\begin{align*}
|\theta^{j}(s, t_{n})- \theta^{j}(0, t_{n})| \leq (L_{j})^{\frac{p-1}{p}} \| \partial_{s} \theta^{j}(t_{n}) \|_{L^{p}(I_{j})} \leq C
\end{align*}
 for any $s \in [0,L_{j}]$,
  we obtain that the sequence $\tilde{\theta}^{j}(\cdot, t_{n}):=\theta^{j}(\cdot, t_{n})- 2\pi z_{n}$, with $z_{n}\in \Z$ chosen in such a way that $ |\theta^{j}(0, t_{n}) - 2\pi z_{n}| \leq 2\pi$,
 satisfies in addition the uniform bound
 $$ \|  \tilde{\theta}^{j}(t_{n}) \|_{W^{1,p}(I_{j})} \leq C(D(\bt_{0}),p, L_{j}).$$
Therefore, by Arzel\`a-Ascoli Theorem, possibly extracting a further subsequence we have that $\tilde{\theta}^{j}(t_{n}) \rightarrow \theta^{j}_{\infty}$ uniformly as $n\to \infty$. 
Notice also that $\tilde{\theta}^{j}(t_{n}) \rightharpoonup \theta^{j}_{\infty}$ weakly in $W^{1,p}(I_{j})$,   
and that from the uniform bounds 
$$\| |\partial_{s} \tilde\theta^{j}(t_{n})|^{p-2}\partial_{s} \tilde\theta^{j}(t_{n})\|_{L^{\frac{p}{p-1}} (I_{j})} \leq C
\quad\text{ and }\quad\|( |\partial_{s} \tilde\theta^{j}(t_{n})|^{p-2}\partial_{s} \tilde\theta^{j}(t_{n}))_{s}\|_{L^{2}(I_{j})} \leq C$$ (which follows from \eqref{aiutino2}, \eqref{E1}, \eqref{E2}, \eqref{E3}) we infer  a uniform bound in the $H^{1}$-norm  giving that 
 $|\partial_{s} \tilde\theta^{j}(t_{n})|^{p-2}\partial_{s} \tilde\theta^{j}(t_{n})\rightharpoonup
|\partial_{s} \theta^{j}_\infty|^{p-2}\partial_{s} \theta^{j}_\infty$ weakly
in $H^{1}(I_{j})$  and uniformly on $I_{j}$.
Since, if $\bt$ solves \eqref{E1}, \eqref{E2}, \eqref{E3}, then so does $\bt-(2\pi z^{1}, 2\pi z^{2}, 2\pi z^{3})$ with $z^{i} \in \Z$ and with no change in the Lagrange multipliers, we can pass to the limit as $n\to \infty$ and obtain
 that $\bt_{\infty}:=(\theta_{\infty}^{1},\theta_{\infty}^{2},\theta_{\infty}^{3})$ satisfies the constraint \eqref{constraintlimit}.
 Moreover, passing to the limit in \eqref{eqtest} we infer that 
$\bt_{\infty}$ also fulfills the system  \eqref{sislim},
together with the boundary conditions \eqref{condlim}.
\end{proof}



\appendix\renewcommand{\thesection}{\Alph{section}}
\setcounter{equation}{0}
\renewcommand{\theequation}{\Alph{section}\arabic{equation}}

\section{Appendix}

\begin{proof}[Proof of Lemma~\ref{lem:4.1}] 
We adapt to the present setting the arguments presented in \cite[Lemma~3.11]{OPW18}.
Let $j \in \{1,2,3\}$, and let $q:=p/(p-1)$. 
First of all notice that by Theorem \ref{thm:3.2}   we have that $\theta^{j}_{n}(\cdot,t)  \in W^{1,p}(I_{j})$ 
for any $t \in [0,T]$, and
\begin{align} \label{eq:4.6old}
\sup_{t \in [\,0, T\,]}  \| \theta^{j}_{n}(\cdot,t) \|_{L^{2}(I_{j})}
 \le  C(T, D(\bt_{0}) , \| \theta^{j}_{0}\|_{L^{2}(I_{j})}), \qquad \sup_{t \in [\,0, T\,]}  \| \partial_{s}\theta^{j}_{n}(\cdot,t) \|^{p}_{L^{p}(I_{j})}
 \le  p \,D(\bt_{0}) .
\end{align} 
Therefore there exists a map $\theta^{j} \in L^{\infty}(0,T; W^{1,p}(I_{j}))$ such that, up to a subsequence, 
\begin{align*}
\theta^{j}_{n} \rightharpoonup \theta^{j} \quad \text{weakly$\ast$} \text{ in }  L^{\infty}(0,T; W^{1,p}(I_{j})) \qquad \text{as} \quad n \to \infty. 
\end{align*} 

Next note that, since $\theta^{j}_{n}(s, \cdot)$ is absolutely continuous in $[\,0, T\,]$, 
we infer from H\"older's inequality that  
\begin{align*}
\| \theta^{j}_{n}(\cdot, t_{2}) - \theta^{j}_{n}(\cdot, t_{1}) \|_{L^{2}(I_{j})} 
 &= \left( \int_{I_{j}} \left|\int^{t_{2}}_{t_{1}} \dfrac{\pd \theta^{j}_{n}}{\pd t}(s,\tau) \, d\tau \right|^{2}\, ds \right)^{\frac{1}{2}} 
 \\
 &\le \left( \int^{t_{2}}_{t_{1}} \int_{I_{j}} \left|\dfrac{\pd \theta^{j}_{n}}{\pd t}(s,\tau)\right|^{2}\, ds d \tau \right)^{\frac{1}{2}} 
 (t_{2}-t_{1})^{\frac{1}{2}},
\end{align*} 
for any $t_{1}$, $t_{2} \in [\,0, T\,]$ with $t_{1}< t_{2}$.
Using Theorem \ref{thm:3.2}, we find that
\begin{align} \label{eq:4.4}
\int^{t_{2}}_{t_{1}} \int_{I_{j}} \left| \dfrac{\pd \theta^{j}_{n}}{\pd t}(s,\tau)\right|^{2}\, ds d \tau
 \leq \int^{t_{2}}_{t_{1}} \int_{I_{j}} |\boldsymbol{V}_{n}(s,\tau)|^{2} \, ds d \tau 
 \le 2 D(\bt_{0}).  
\end{align}
Hence we obtain 
\begin{align} \label{eq:4.5}
\| \theta^{j}_{n}(\cdot, t_{2}) - \theta^{j}_{n}(\cdot, t_{1}) \|_{L^{2}(I_{j})} \le  \sqrt{2 D(\bt_{0}) } (t_{2}-t_{1})^{\frac{1}{2}}. 
\end{align}

We now turn to the proof of \eqref{eq:4.3}. 
First of all observe that by \eqref{eq:4.6old} and embedding theory we have that
\begin{align}\label{linftyb}
\sup_{ t \in [0,T]} \| \theta^{j}_{n} (\cdot, t) \|_{L^{\infty}(I_{j})} \leq C, \qquad \text{ with } C=C(L_{j},T,D(\bt_{0}),\| \theta^{j}_{0}\|_{L^{2}(I)},p).
\end{align}
Moreover, again by \eqref{eq:4.6old}, for any $t \in [0,T]$ we have
\begin{align}\label{pip}
|\theta^{j}_{n}(s_{2},t) - \theta^{j}_{n}(s_{1},t)| \leq 
\int_{s_{1}}^{s_{2}}
\left|   \partial_s \theta^{j}_{n}(s, t)\right| ds \leq C |s_{2}-s_{1}|^{1/q},
\end{align}
with $C=C(D(\bt_{0}),p)$.
Fix $0 \le t_1 \le t_2 \le T$ arbitrarily and set $$\Gm(\cdot):= \theta^{j}_n(\cdot, t_2) - \theta^{j}_n(\cdot, t_1) \in W^{1,p}(I_{j}).
$$ 
By an interpolation inequality (see for instance \cite[Thm. 5.9]{Adams})
we find  
$
\| \Gm \|_{L^\infty} \le C \| \Gm \|_{L^q}^{1/2} \|  \Gm \|_{W^{1,p}}^{1/2},  
$
and using \eqref{eq:4.6old} we get
$\| \Gm \|_{L^\infty} \le C \| \Gm \|_{L^q}^{1/2}$. In particular, for $p\ge 2$ we have 
$\| \Gm \|_{L^\infty} \le C(L_j) \| \Gm \|_{L^2}^{1/2}$,
so that by 
 \eqref{eq:4.5} we infer
\begin{equation}\label{lala}
\| \Gm \|_{L^\infty} \le C | t_2 - t_1 |^{1/4},
\end{equation}
with $C= C(T, L_j, D(\bt_{0}) , \| \theta^{j}_{0}\|_{L^{2}(I)},p)$. For $p\in (1,2)$, that is $q>2$, 
another interpolation inequality gives $\| \Gm \|_{L^q} \le \| \Gm \|_{L^2}^\theta \| \Gm \|_{L^\infty}^{1-\theta}$,
with $\theta=2/q$. Recalling that $\Gm\in L^\infty$, we then obtain
\begin{equation}\label{lulu}
\| \Gm \|_{L^\infty} \le C\| \Gm \|_{L^q}^\frac 12\le 
C\| \Gm \|_{L^2}^\frac \theta 2  \| \Gm \|_{L^\infty}^\frac{1-\theta}{2}  
\le C\| \Gm \|_{L^2}^\frac \theta 2 = C\| \Gm \|_{L^2}^\frac 1q
\le C(t_2-t_1)^\frac{p-1}{2p}.
\end{equation}
From \eqref{lala} and \eqref{lulu} it follows that 
\begin{align}  \label{eq:4.12}
\| \theta^{j}_{n}(t_{2}) - \theta^{j}_{n}(t_{1}) \|_{C^{0}(I_{j})} \le C | t_{2} - t_{1} |^{\alpha} . 
\end{align}
From the above inequality and \eqref{pip} we then get
\begin{align}
|\theta^{j}_{n}( s_{2}, t_{2}) - \theta^{j}_{n}(s_{1}, t_{1})| \leq C (|t_{2}-t_{1}|^{\alpha} + |s_{2}-s_{1}|^{1/q})
\le  C (|t_{2}-t_{1}|^{\alpha} + |s_{2}-s_{1}|^{\alpha})
\end{align}
for any $(t_{i}, s_{i}) \in [0,T] \times [0,L_{j}]$, $i=1,2$.
Application of the Arzel\`a-Ascoli Theorem yields \eqref{eq:4.3}. 
In particular $\theta^{j} ( \cdot, t) \in W^{1,p}(I_{j})$ for all times $t\in [0,T]$.
Moreover, setting $t_{1}=0$ in \eqref{eq:4.12}, we have that 
\begin{align*}
\| \theta^{j}(t) - \theta^{j}_{0} \|_{C^{0}(I_{j})} \to 0 \quad \text{as} \quad t \downarrow 0. 
\end{align*}  
From \eqref{eq:4.4}  we also infer that  there exists $V^{j} \in L^{2}(0,T; L^{2}(I_{j})) $ such that
\begin{align}\label{VH}
V^{j}_{n} = \pd_{t} \theta^{j}_{n} \rightharpoonup V^{j}  \quad \text{in} \quad L^{2}(0,T; L^{2}(I_{j})). 
\end{align}
Moreover, for any $v \in C_{0}^{\infty}((0,T) \times I_{j})$, we have
\begin{align*}
\int_{0}^{T} \int_{I_{j}} \theta^{j}_{n} v_{t}\,  ds dt = -\int_{0}^{T}\int_{I_{j}}  V^{j}_{n}  v  \, dsdt \to  -\int_{0}^{T} \int_{I_j} V^{j} v \, dsdt
\end{align*}
as $n\to\infty$, and using the fact  that $\theta^{j}_{n} \to \theta^{j}$ uniformly, we obtain
\begin{align*}
\int_{0}^{T}\int_{I_{j}} \theta^{j}_{n} v_{t} \, ds dt \to \int_{0}^{T} \int_{I_{j}} \theta^{j} v_{t} \, ds dt,
\end{align*}
from which we infer that $\theta^{j}$ admits weak derivative $\theta^{j}_{t}=V^{j}$,
$\theta^{j} \in H^{1}(0,T; L^{2}(I_{j}))$ and \eqref{eq:4.2} holds. 
Finally, it follows from \eqref{eq:4.4} that \eqref{eq:4.1} also holds. 
\end{proof}

\begin{proof}[Proof of Lemma~\ref{lem:4.2}]
This is a straight-forward adaptation of \cite[Lemma~3.12]{OPW18}.
Let $j\in \{ 1,2,3 \}$.
We show the proof only for $\bar{\theta}^{j}_{n}$, since analogous arguments holds for $\underline{\theta}^{j}_{n}$.
Recalling \eqref{eq:4.6old}, we  see that $\bar{\theta}^{j}_{n} \in L^{\infty}(0,T; W^{1,p}(I_{j}))$. 
In particular, equations \eqref{linftyb}, \eqref{pip}  hold with $\theta^{j}_{n}$ replaced by $\bar{\theta}^{j}_{n}$. 
 
Fix now $t \in (\,0, T\,]$ arbitrarily. 
Then there exists a family of intervals $\{ (\,(i_{n}-1) \tau_{n}, i_{n} \tau_{n} \,] \}_{n \in \N}$ such that 
$t \in (\,(i_{n}-1) \tau_{n}, i_{n} \tau_{n} \,]$. 
From \eqref{eq:4.12} we infer  that 
\begin{align*}
\| \bar{\theta}^{j}_{n}(t) - \theta^{j}_{n}(t) \|_{C^{0}(I_{j})} 
& = \| \theta^{j}_{i_{n},n} - \theta^{j}_{n}(t) \|_{C^{0}(I_{j})} 
 = \| \theta^{j}_{n}(i_{n} \tau_{n}) - \theta^{j}_{n}(t) \|_{C^{0}(I_{j})} \\
& \le C |i_{n} \tau_{n} - t|^{\alpha} 
 \le C \tau_{n}^{\alpha} \to 0 \quad \text{as} \quad n \to \infty  .
\end{align*} 
Since $\theta^{j}_{n} \to \theta^{j}$ in $C^{0}([0,T] \times I_{j})$ by  Lemma~\ref{lem:4.1}, and $t$ was arbitrarily chosen, we infer that $\bar{\theta}^{j}_{n} \to \theta^{j}$ in $C^{0}([0,T] \times I_{j})$. 

We turn to the proof of \eqref{eq:4.14}. 
Recalling again \eqref{eq:4.6old}, we also see that $\partial_{s}\bar{\theta}^{j}_{n} \in L^{p}(0,T; L^{p}(I_{j}))$ and $\| \partial_{s}\bar{\theta}^{j}_{n}\|_{L^{p}(0,T; L^{p}(I_{j}))} \leq C$, for all $n \in \mathbb{N}$. Since $L^{p}(0,T; L^{p}(I_{j}))$ is a reflexive Banach space  there exists $ v^{j} \in L^{p}(0,T; L^{p}(I_{j}))$ such that $ \partial_{s}\bar{\theta}^{j}_{n} \rightharpoonup  v^{j}$.
This implies that 
$$ \int^{T}_{0} \int_{I_{j}} \partial_{s}\bar{\theta}^{j}_{n} \cdot \vp  \, dsdt \to  \int^{T}_{0} \int_{I_{j}}  v^{j} \cdot \vp \, dsdt $$
for any $\vp \in L^{q}(0,T; L^{q}(I_{j}))$, with $q=p/(p-1)$.
On the other hand, if $\vp \in L^{\infty}(0,T; C_{0}^{\infty}(I_{j}))$ we infer that
$$  \int^{T}_{0} \int_{I_{j}}  \partial_{s}\bar{\theta}^{j}_{n} \cdot \vp \,dsdt = - \int^{T}_{0} \int_{I_{j}}  \bar{\theta}^{j}_{n} \cdot \partial_{s}\vp \, dsdt \to 
 - \int^{T}_{0} \int_{I_{j}}  \theta^{j} \cdot \partial_{s}\vp \, dsdt, $$
where we have used that $\bar{\theta}^{j}_{n} \to \theta^{j}$. Hence we obtain that $v^{j}=\partial_{s}\theta^{j}$, 
and the claim follows.
\end{proof}

\begin{proof}[Proof of Lemma~\ref{lem:4.4}]
This is a straight-forward adaptation of \cite[Lemma~3.13]{OPW18}.
Notice first that, from \eqref{regularity}, \eqref{boundLLdis}, and Theorem~\ref{thm:3.2} it follows that
\begin{align}\label{ester}
\| |\partial_s\theta^{j}_{i,n} |^{p-2} \partial_s \theta^{j}_{i,n} \|_{H^1 (I_{j})} &  \leq C (1 +\sum_{r=1}^{3}\| V_{i,n}^{r} \|_{L^{2}(I_{j})} ) 
\end{align}
for $j=1,2,3$ and for all $i=1, \ldots,n$, where $C=C(L_{j},p, D(\bt_{0}))$. 
Recalling Theorem~\ref{thm:3.2}, for all $j=1,2,3$ we get 
\begin{align}
\int^{T}_{0} \|  | \partial_{s}\bar{\theta}^{j}_{n}|^{p-2}  \partial_{s}\bar{\theta}^{j}_{n} \|_{H^1(I_j)}^{2} \, dt 
 \le C \int^{T}_{0} ( 1 + \| \V_{n} \|^{2}_{L^{2}(I_j)} ) \, dt 
 \le C. 
\end{align}
Thus, recalling also \eqref{nbc99}, we find $w^j \in L^{2}(0,T; H^1_{0}(I_j))$ such that 
\begin{align} \label{eq:4.18}
 | \partial_{s}\bar{\theta}^{j}_{n}|^{p-2}  \partial_{s}\bar{\theta}^{j}_{n} \rightharpoonup w^j \quad \text{in} \quad 
L^{2}(0,T; H^1(I_j)) \quad \text{as} \quad n \to \infty ,
\end{align}
 and
$\| w^j \|_{L^{2}(0,T; H^1(I_j))} \leq C$. In particular,
this implies that
\begin{align}\label{peppa}
\int_{0}^{T} \int_{I_j}  | \partial_{s}\bar{\theta}^{j}_{n}|^{p-2}  \partial_{s}\bar{\theta}^{j}_{n} 
 \cdot \vp \, ds dt \to \int_{0}^{T} \int_{I_j} w^{j} \cdot \vp \, ds dt ,
\end{align}
for all $ \vp\in L^{2}(0,T; L^{2}(I_j))$.
On the other hand, letting $q:=p/(p-1)$, from \eqref{ester} it follows  that
\[
\| |(\theta^{j}_{i,n} )_{s}|^{p-2} (\theta^{j}_{i,n} )_{s} \|_{L^{q} (I_{j})}
\leq C \| |(\theta^{j}_{i,n} )_{s}|^{p-2} (\theta^{j}_{i,n} )_{s}\|_{H^1 (I_{j})}
\leq C (1 +\sum_{r=1}^{3}\| V_{i,n}^{r} \|_{L^{2}(I_{j})} ) ,
\]
so that,
by Theorem~\ref{thm:3.2} we also get
\begin{align*}
\int^{T}_{0} \|   | \partial_{s}\bar{\theta}^{j}_{n}|^{p-2}  \partial_{s}\bar{\theta}^{j}_{n}   \|_{L^{q}(I_j)}^{2} \, dt 
 \le C \int^{T}_{0} ( 1 + \| \V_{n} \|^{2}_{L^{2}(I_j)} ) \, dt 
 \le C.
\end{align*}
The space $L^{2}(0,T; L^{q}(I_j))$ is reflexive with dual space given by $(L^{2}(0,T; L^{q}(I_j)))^{*}= L^{2}(0,T; L^{p}(I_j))$. Hence there exists $\tilde{\xi}^j \in L^{2}(0,T; L^{q}(I_j))$ such that
\begin{align}\label{peppa2}
\int_{0}^{T} \int_{I_j}  | \partial_{s}\bar{\theta}^{j}_{n}|^{p-2}  \partial_{s}\bar{\theta}^{j}_{n} \cdot \vp \, ds dt \to 
\int_{0}^{T} \int_{I_j} \tilde{\xi}^j \cdot \vp \, ds dt  \quad \forall \vp \in L^{2}(0,T; L^{p}(I_j)).
\end{align}
Together with \eqref{peppa} and Lemma \ref{lem:4.2}, we infer that 
$w^j=\tilde{\xi}^j$. 


Next, we set 
\begin{align*}
F(\psi) := \frac{1}{p}\,\| \psi_{s} \|_{L^{p}(0,T; L^{p}(I_j))}^{p}.  
\end{align*}
Using the convexity of the map $y \to \frac{1}{p} |y|^{p}$, we see that 
\begin{align} \label{eq:4.19}
F(\psi) - F(\bar{\theta}^{j}_{n})  
 \ge \int^{T}_{0} \int_{I_j} | \partial_{s}\bar{\theta}^{j}_{n}|^{p-2}  \partial_{s}\bar{\theta}^{j}_{n}
 \cdot (\psi - \bar{\theta}^{j}_{n})_{s} \, ds dt 
 \quad \text{for any} \  \psi \in L^{\infty}(0,T; W^{1,p}(I_j)). 
\end{align}
Recalling \eqref{eq:4.14} 
and letting $n \to \infty$ in~\eqref{eq:4.19}, we have   
\begin{align} \label{eq:4.20}
F(\psi) - F(\theta^j) \ge \int^{T}_{0} \int_{I_j} w^j \cdot (\psi - \theta^j)_{s} \, ds dt, 
\end{align}
where we have used integration by parts (recall \eqref{nbc99} and $w^{j} \in L^{2}(0,T; H^{1}_{0}(I_{j}))$) and \eqref{eq:4.13}.
Letting now $\psi= \theta^j + \ve \vp$ in \eqref{eq:4.20}  for some $\vp \in L^{\infty}(0,T; W^{1,p}(I_j))$ and $\ve >0$, we obtain 
\begin{align} \label{eq:4.21}
\dfrac{F(\theta^j + \ve \vp) - F(\theta^j)}{\ve} \ge \int^{T}_{0} \int_{I_j} w^j \cdot  \vp_{s} \, ds dt.
\end{align}
On the other hand, letting $\psi= w^j - \ve \vp$ in \eqref{eq:4.20}, we also have
\begin{align} \label{eq:4.22}
\dfrac{F(\theta^j) - F(\theta^j - \ve \vp)}{\ve} \le \int^{T}_{0} \int_{I_j} w^j \cdot  \vp_{s} \, ds dt.
\end{align}
Letting $\ve \downarrow 0$, from \eqref{eq:4.21} and \eqref{eq:4.22} 
we then get
\begin{align}\label{peppa4}
\int^{T}_{0} \int_{I_j}  | \partial_{s}\theta^j|^{p-2} \theta^j_s \cdot \vp_{s} \, ds dt 
 = \int^{T}_{0} \int_{I_j} w^j \cdot  \vp_{s} \, ds dt = \int^{T}_{0} \int_{I_j} \tilde\xi^j \cdot  \vp_{s} \, ds dt,
\end{align}
for all $\vp \in L^{\infty}(0,T; W^{1,p}(I_j))$. 
Together with \eqref{peppa2} this gives the thesis.
\end{proof}

\begin{lem}\label{corPaola2}
Suppose $\theta=\theta(s,t) \in C^{0, \alpha}([0,T] \times \bar{I})$ for some $\alpha \in (0,1]$. Then $\text{osc } \theta \in C^{\alpha}([0,T])$.
\end{lem}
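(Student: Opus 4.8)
The plan is to reduce the claim to the separate $\alpha$-Hölder continuity of the pointwise maximum and minimum of $\theta(\cdot,t)$ over the compact interval $\bar I$. Set
\begin{align*}
M(t):=\max_{s\in\bar I}\theta(s,t),\qquad m(t):=\min_{s\in\bar I}\theta(s,t),
\end{align*}
both of which are well defined and attained, since $\bar I$ is compact and $\theta(\cdot,t)$ is continuous for each fixed $t$. By definition $\text{osc}_{\bar I}\,\theta(\cdot,t)=M(t)-m(t)$, so it suffices to prove $M,m\in C^\alpha([0,T])$.

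The key step is the standard ``evaluate at the extremizer'' argument. Let $[\theta]_\alpha$ denote the Hölder seminorm of $\theta$ on $[0,T]\times\bar I$. Restricting to a fixed $s$, the points $(s,t_1)$ and $(s,t_2)$ are at distance $|t_1-t_2|$, so that $|\theta(s,t_1)-\theta(s,t_2)|\le [\theta]_\alpha|t_1-t_2|^\alpha$ uniformly in $s\in\bar I$. Now fix $t_1,t_2\in[0,T]$ and pick $s_*\in\bar I$ with $M(t_1)=\theta(s_*,t_1)$. Then
\begin{align*}
M(t_2)\ge\theta(s_*,t_2)\ge\theta(s_*,t_1)-[\theta]_\alpha|t_1-t_2|^\alpha=M(t_1)-[\theta]_\alpha|t_1-t_2|^\alpha,
\end{align*}
and exchanging the roles of $t_1$ and $t_2$ yields $|M(t_1)-M(t_2)|\le[\theta]_\alpha|t_1-t_2|^\alpha$. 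The identical argument, applied to a point realizing the minimum, gives $|m(t_1)-m(t_2)|\le[\theta]_\alpha|t_1-t_2|^\alpha$.

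Combining the two estimates, I would conclude
\begin{align*}
\left|\text{osc}_{\bar I}\,\theta(\cdot,t_1)-\text{osc}_{\bar I}\,\theta(\cdot,t_2)\right|\le|M(t_1)-M(t_2)|+|m(t_1)-m(t_2)|\le 2[\theta]_\alpha|t_1-t_2|^\alpha,
\end{align*}
which is precisely the assertion. I do not expect any genuine obstacle here: the only points requiring (minor) care are that the maximum and minimum are attained, for which compactness of $\bar I$ is used, and that the temporal modulus of continuity needed for the extremizer estimate holds \emph{uniformly in $s$} --- which is exactly what the joint Hölder continuity of $\theta$ provides.
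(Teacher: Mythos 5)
Your proof is correct and follows essentially the same argument as the paper: both evaluate $\theta$ at the extremizing points and use the joint H\"older continuity to get a uniform-in-$s$ temporal modulus, the only difference being that you prove $M$ and $m$ are separately $\alpha$-H\"older and then subtract, whereas the paper bounds the difference of oscillations in a single chain of inequalities using the comparisons $\theta(\bar{s}_2,t_2)\ge\theta(\bar{s}_1,t_2)$ and $\theta(\underline{s}_2,t_2)\le\theta(\underline{s}_1,t_2)$. This is purely a difference of packaging, not of substance.
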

\begin{proof}
By definition we have
$\text{osc } \theta(t) = \max_{s\in \bar{I}} \theta(s,t)  - \min_{s\in \bar{I}} \theta(s,t)=: \theta (\bar{s}, t) - \theta (\underline{s}, t). $ Using this notation it follows
\begin{align*}
\text{osc } \theta(t_{1}) - \text{osc } \theta(t_{2}) &=  [\theta (\bar{s}_{1}, t_{1}) - \theta (\underline{s}_{1}, t_{1}) ] - [\theta (\bar{s}_{2}, t_{2}) - \theta (\underline{s}_{2}, t_{2}) ]\\
& \leq \theta (\bar{s}_{1}, t_{1}) - \theta (\bar{s}_{1}, t_{2}) + \theta (\underline{s}_{1}, t_{2}) -\theta (\underline{s}_{1}, t_{1}) \leq C |t_{1} - t_{2}|^{\alpha},
\end{align*}
where we have used $\theta (\bar{s}_{2}, t_{2}) \geq \theta (\bar{s}_{1}, t_{2})$  and $\theta (\underline{s}_{2}, t_{2}) \leq \theta (\underline{s}_{1}, t_{2})$. The claim follows.
\end{proof}




\bibliography{ref}
\bibliographystyle{acm}

\end{document}